\title[Microlocal properties of pseudodifferential operators]{Microlocal properties of Shubin pseudodifferential and localization operators}
\author[R. Schulz]{Ren\'e Schulz}
\author[P. Wahlberg]{Patrik Wahlberg}
\numberwithin{equation}{section}          
\newtheorem{thm}{Theorem}
\numberwithin{thm}{section}
\newcommand{\rubrik}{}
\newtheorem{prop}[thm]{Proposition}
\newtheorem{cor}[thm]{Corollary}
\newtheorem{lem}[thm]{Lemma}
\theoremstyle{definition}
\newtheorem{defn}[thm]{Definition}
\newtheorem{example}[thm]{Example}
\theoremstyle{remark}
\newtheorem{rem}[thm]{Remark}              
\newcommand{\pd}[1] {\partial ^#1}
\newcommand{\pdd}[2] {\partial_{#1} ^{#2}}
\newcommand{\ro}{\mathbb R}
\newcommand{\no}{\mathbb N}
\newcommand{\rr}[1]{\mathbb R^{#1}}
\newcommand{\nn}[1]{\mathbb N^{#1}}
\newcommand{\co}{\mathbb C}
\newcommand{\dd}{\mathrm {d}}
\newcommand{\ep}{\varepsilon}
\newcommand{\fy}{\varphi}
\newcommand{\supp}{\operatorname{supp}}
\newcommand{\wpr}{{\text{\footnotesize $\#$}}}
\newcommand{\eabs}[1]{\langle #1\rangle}
\newcommand{\Sp}{\operatorname{Sp}}
\newcommand{\charac}{\operatorname{char}}
\newcommand{\conesupp}{\operatorname{conesupp}}
\newcommand{\dbar}{{{{\ \mathchar'26\mkern-12mu \mathrm d}}}}
\newcommand{\WF}{\mathrm{WF}}
\newcommand{\cS}{\mathscr{S}}
\newcommand{\cD}{\mathscr{D}}
\newcommand{\wh}{\widehat}
\def\la{\langle}
\def\ra{\rangle}
\begin{document}

\begin{abstract}
We investigate global microlocal properties of localization operators and Shubin pseudodifferential operators. The microlocal regularity is measured in terms of a scale of Shubin-type Sobolev spaces. In particular, we prove microlocality and microellipticity of these operators.
\end{abstract}

\keywords{Gabor wave front set, Shubin calculus, localization operator, microlocality, microellipticity}
\subjclass[2010]{35S05, 35A18, 35A22, 35A27, 42B37}

\maketitle

{\let\thefootnote\relax\footnotetext{Corresponding author: Patrik Wahlberg,
phone +46470708637. Authors' addresses. 
R. Schulz: Leibniz Universit\"at Hannover, Institut f\"ur Analysis, Welfenplatz 1, D--30167 Hannover, Germany, rschulz[AT]math.uni-hannover.de, 
P. Wahlberg: Department of Mathematics, Linn\ae us University, 351 95 V\"axj\"o, Sweden, patrik.wahlberg[AT]lnu.se.}}

\section{Introduction}

The microlocal approach to the study of singularities of a tempered distribution, that is in terms of some wave front set, may be viewed as the study of a resolution of singularities in the phase space. 
Several types of wave front sets have been investigated, including those that also encode growth singularities apart from singularities defined by lack of derivatives. 

One of these wave front sets was introduced in \cite{Hormander0} and further investigated in \cite{Rodino1}, therein called the Gabor wave front set $\WF_G(u)$ of a tempered distribution $u$. 
It consists of phase space directions in which a tempered distribution deviates from being both smooth and rapidly decaying, i.e. a Schwartz function. 
It has been shown that the Gabor wave front set may be characterized by the lack of rapid decay of the Gabor (or short-time Fourier) transform in open cones of the phase space. 
A main application of the Gabor wave front set concerns propagation of singularities for Schr\"odinger-type equations, see e.g. \cite{Cordero2,Nakamura1,Nicola2,Rodino2,SW}.  

A natural class of operators associated to the short-time Fourier transform is localization operators (also called anti-Wick-quantized operators or Toeplitz operators), see e.g. \cite{Cordero1,Folland1}. These may be interpreted as time-frequency multiplication operators defined by a symbol. 

It is a natural question to study the interplay of the Gabor wave front set and localization operators, and this paper is intended to contribute to  this field. Many of the arising questions may be transferred to questions regarding pseudodifferential operators. This is usually achieved by exploiting the Weyl--Wick connection, which states that any localization operator may be written as a Weyl-quantized pseudodifferential operator with a symbol that is the convolution of the anti-Wick symbol with the Wigner distribution of the window function. 

The Shubin calculus of pseudodifferential operators has been used to prove results on microlocality and microellipticity of pseudodifferential operators with respect to the Gabor wave front set \cite{Hormander0,Rodino1}. Roughly speaking these state that the application of a time-frequency filter to a given signal does not create more singularities than originally present, and filters out at most singularities situated at points where its symbol vanishes. 

In this paper we study a finer resolution of singularities by replacing the Gabor wave front set with the notion $\WF_{Q^s} (u) \subseteq \WF_G(u)$, called the Sobolev--Gabor wave front set of order $s \in \ro$, based on the scale of Shubin-type Sobolev spaces $Q^s$. 
The rapid decay in open cones in phase space, that negatively defines the Gabor wave front set, is then replaced by polynomially weighted square integrability in open cones. 
The Sobolev--Gabor wave front set was introduced by Nicola and Rodino \cite{Nicola2}. It is in fact a special case of a general modulation space construction where the exponent $p=2$ is relaxed to $1 \leq p \leq \infty$ \cite{Cordero2}. 

While the Sobolev--Gabor wave front set has been used in \cite{Nicola2} to study propagation of singularities for Schr\"odinger equations, few of its other properties have been investigated and the present paper is intended to investigate some of its basic features. Our main results concern microlocality and microellipticity of Shubin pseudodifferential operators with respect to Sobolev--Gabor wave front sets, of orders that differ by the order of the operator. Together with  
\begin{equation*}
\WF_G(u) = \overline{\bigcup_{s \in \ro} \WF_{Q^s}(u)} \subseteq T^* \rr d \setminus 0,
\end{equation*}
(see Proposition \ref{WFequality})
we recover known microlocal and microelliptic results for the Gabor wave front set. 

The paper is structured as follows.
In Section \ref{sec:prelim} we introduce notation and preliminary results. In Section \ref{sec:wfintro} the Sobolev--Gabor wave front set is described and related to the Gabor wave front set, and several characterizations are investigated. Finally, in Section \ref{sec:micro} we prove microlocality and microellipticity with respect to the Sobolev--Gabor wave front set for pseudodifferential and localization operators.

\section{Preliminaries}\label{sec:prelim}

An open ball of radius $r>0$ and center at the origin in $\rr d$ is denoted $B_r$,
and the unit sphere in $\rr d$ is denoted $S_{d-1}$.
We write $f (x) \lesssim g (x)$ provided there exists $C>0$ such that $f (x) \leq C g(x)$ for all $x$ in the domain of $f$ and $g$. The Japanese bracket on $\rr d$ is defined by $\la x \ra = \sqrt{1+|x|^2}$. For it, Peetre's inequality $\la{x+y}\ra^{s}\lesssim \la{x}\ra^{s}\la{y}\ra^{|s|}$, $s \in\ro$, is valid.

The Fourier transform on the Schwartz space $\mathscr S(\rr d)$ is normalized as
\begin{equation*}
\mathscr{F} f(\xi) = \wh f(\xi) = \int_{\rr d} f(x) e^{- i x \cdot \xi} \dd x, \quad \xi \in \rr d, \quad f \in \mathscr S(\rr d),
\end{equation*}
and extended to its dual, the tempered distributions $\mathscr S'(\rr d)$. 

The inner product $(\cdot,\cdot)$ on $L^2 (\rr d) \times L^2(\rr d)$ is conjugate linear in the second argument. 
We also use these notations to denote the (conjugate) linear action of $\cS'(\rr d)$ on $\cS(\rr d)$.

In the following we recall some notions of time-frequency analysis and pseudodifferential operators on $\rr d$. For these topics \cite{Folland1,Grochenig1,Hormander0,Lerner1,Nicola1,Shubin1} may serve as general references.

By $\psi_0\in\cS(\rr{d})$ we denote the $L^2$-normalized standard Gaussian window function $\psi_0(y) = \pi^{-d/4}e^{-\frac{1}{2}|y|^2}$.
Let $u \in \cS'(\rr d)$ and $\psi \in \cS(\rr d) \setminus 0$. The \emph{short-time Fourier transform} (STFT)\footnote{The STFT is also called Gabor transform in case $\psi=\psi_0$ and is closely connected to the so-called Bargmann and Fourier--Bros--Iagolnitzer transforms.} $V_\psi u $ of $u$ with respect to the window function $\psi$, is defined as
\begin{equation*}
V_\psi u :\ \rr {2d} \mapsto \co, \quad z \mapsto V_\psi u(x,\xi) = (u, \Pi(z) \psi ),
\end{equation*}
where $\Pi(z)=M_\xi T_x$, $z=(x,\xi) \in \rr {2d}$, is the time-frequency shift composed of the translation operator $T_x\psi(y)=\psi(y-x)$ and the modulation operator $M_\xi \psi(y)=e^{i y \cdot \xi} \psi(y)$.

We have $V_\psi u \in C^\infty(\rr {2d})$ and there exists $N \in \no$ such that 
\begin{equation*}
|V_\psi u(z)| \lesssim \eabs{z}^N, \quad z \in \rr {2d}.
\end{equation*}
Let $\psi \in \cS(\rr d)$ satisfy $\| \psi \|_{L^2}=1$.
The Moyal identity
\begin{equation*}
(u,g)= (2\pi)^{-d} \int_{\rr {2d}} V_\psi u(z) \, \overline{V_\psi g(z)} \, \dd z, \quad g \in \cS(\rr d), \quad u \in \cS'(\rr d),
\end{equation*}
is sometimes written
\begin{equation*}
u = (2\pi)^{-d} \int_{\rr {2d}} V_\psi u(x,\xi) \, M_\xi T_x \psi \, \dd x \, \dd \xi, \quad u \in \cS'(\rr d),
\end{equation*}
with action understood to take place under the integral. In this form it is an inversion formula for the STFT.
It can also be written $(2\pi)^{-d} V_\psi^* V_\psi = I$ which is formulated with the adjoint 
\begin{equation*}
V_\psi^* F = \int_{\rr {2d}} F(z) \, \Pi(z) \psi \, \dd z 
\end{equation*}
that satisfies 
\begin{equation*}
(V_\psi^* F,g)_{L^2(\rr d)}= (F,V_\psi g )_{L^2(\rr {2d})}, 
\end{equation*}
for $g \in \cS(\rr d)$ and $F \in (L_{\rm loc}^\infty \cap \cS')(\rr {2d})$, and extends to $F \in \cS' (\rr {2d})$. 

Let $a \in C^\infty (\rr {2d})$ and $m \in \ro$. Then $a$ is a \emph{Shubin symbol} of order $m$, denoted $a\in G^m$, if for all $\alpha,\beta \in \nn d$ there exists a constant $C_{\alpha\beta}>0$ such that
\begin{equation}\label{eq:shubinineq}
|\partial_x^\alpha \partial_\xi^\beta a(x,\xi)|\leq C_{\alpha\beta}\langle (x,\xi)\rangle^{m-|\alpha|-|\beta|}, \quad x,\xi \in \rr d.
\end{equation}
The Shubin symbols form a Fr\'echet space where the seminorms are given by the smallest possible constants in \eqref{eq:shubinineq}.
We denote
\begin{equation*}
G^\infty = \bigcup_{m \in \ro} G^m
\end{equation*}
and obviously
\begin{equation*}
\bigcap_{m \in \ro} G^m = \cS (\rr {2d}). 
\end{equation*}

For $a \in G^m$ a Weyl-quantized pseudodifferential operator is defined by
\begin{equation*}
a^w(x,D) u(x)
= \int_{\rr {2d}} e^{i(x-y) \cdot \xi} a\big((x+y)/2,\xi\big) \, u(y) \, \dd y \, \dbar \xi, \quad u \in \cS(\rr d),
\end{equation*}
when $m<-d$, where we use the convention $\dbar \xi=(2\pi)^{-d} \dd \xi$. The definition extends to general $m \in \ro$ if the integral is viewed as an oscillatory integral.
The operator $a^w(x,D)$ then acts continuously on $\cS(\rr d)$ and extends by duality to a continuous operator on $\cS'(\rr d)$. We remark that this quantization procedure may be extended to a weak formulation which yields continuous  operators $a^w(x,D):\cS(\rr{d}) \mapsto \cS'(\rr d)$, even if $a$ is only an element of $\cS'(\rr {2d})$.

Let $(a_j)_{j \geq 0}$ be a sequence of symbols such that $a_j \in G^{m_j}$ and $m_j \rightarrow - \infty$ as $j \rightarrow +\infty$, and set $m=\max_{j \geq 0} m_j$.
Then there exists a symbol $a \in G^{m}$, unique modulo $\cS(\rr {2d})$, such that
\begin{equation*}
a - \sum_{j=0}^{n-1} a_j \in G^{m_n'}, \quad n \geq 1, \quad m_n' = \max_{j \geq n} \, m_j. 
\end{equation*}
This is called an asymptotic expansion and denoted $a \sim \sum_{j \geq 0} a_j$.

The Weyl calculus enjoys the property $a^w(x,D)^* = \overline{a}^w(x,D)$ where $a^w(x,D)^*$ denotes the formal adjoint.
The Weyl product is the symbol product corresponding to composition of operators, $(a \wpr b)^w(x,D) = a^w(x,D) \, b^w (x,D)$.
It is a bilinear continuous map
\begin{equation*}
\wpr: \, G^m \times G^n \mapsto G^{m+n}, \quad m,n \in \ro.
\end{equation*}
We have the following asymptotic expansion for the Weyl product of $a \in G^m$ and $b \in G^n$, $m,n \in \ro$:
\begin{equation}\label{calculuscomposition1}
a \wpr b(x,\xi) \sim \sum_{\alpha, \beta \geq 0} \frac{(-1)^{|\beta|}}{\alpha! \beta!} \ 2^{-|\alpha+\beta|}
D_x^\beta \pdd \xi \alpha a(x,\xi) \, D_x^\alpha \pdd \xi \beta b(x,\xi), 
\end{equation}
where $D_j=-i \partial_j$, $1 \leq j \leq d$.

The conical support $\conesupp (a)$ of $a \in \cD'(\rr n)$ is defined as the set of all
$x \in \rr n \setminus 0$ such that any conic open set $\Gamma_x \subseteq \rr n \setminus 0$ containing $x$ satisfies:
\begin{equation*}
\overline{\supp (a) \cap \Gamma_x} \quad \mbox{is not compact in} \quad \rr n.
\end{equation*}

We define the microsupport $\mu \supp (a)$ of a symbol $a \in G^m$ analogously to its definition for H\"ormander symbols \cite[p.~118]{Folland1}.\footnote{This notion is also called the wave front set of $a^w(x,D)$ \cite[Chapter~18.1]{Hormander0} and the essential support of $a^w(x,D)$ \cite{Taylor1}.}
If $a \in G^m$ and $z_0 \in T^* \rr d \setminus 0$ then $z_0 \notin \mu \supp (a)$ provided there exists a conic open set $\Gamma \subseteq T^* \rr d \setminus 0$ containing $z_0$ such that 
\begin{equation}\label{musuppdef}
\sup_{z \in \Gamma} \eabs{z}^N \left| \partial^\alpha a(z) \right| < \infty, \quad \alpha \in \nn {2d}, \quad N \geq 0.
\end{equation}
Clearly we have for $a \in G^\infty$
\begin{equation*}
\mu \supp (a) \subseteq \conesupp (a). 
\end{equation*}

Let $a \in G^m$. As in \cite{Shubin1} we call $a$ hypoelliptic of order $m^\prime \leq m$ if it fulfills the estimates
\begin{equation}\label{noncharlowerbound2}
\begin{aligned}
|a(z )| &\geq C \eabs{z}^{m'}, \quad |z| \geq A, \\
|\partial^\alpha a(z)| &\leq C_\alpha |a(z)| \eabs{z}^{-|\alpha|}, \quad \alpha \in \nn {2d}, \quad |z| \geq A,
\end{aligned}
\end{equation}
for suitable $C, C_\alpha, A>0$. The space of all such symbols is denoted $HG^{m,m'}$. 
The symbols in $HG^{m,m}$ are called elliptic.

The concept of hypoellipticity is micro-localizable in the following sense:
\begin{defn}\label{noncharacteristic1}
Let $a\in G^m$. A point in the phase space $z_0 \in T^* \rr d \setminus 0$ is called non-hypercharacteristic of order $m' \leq m$ for $a$ provided there exists an open conic set $\Gamma \subseteq T^* \rr d \setminus 0$ such that $z_0 \in \Gamma$ and $a$ fulfils the estimates \eqref{noncharlowerbound2} when $z \in \Gamma$.
\end{defn}
For $a \in G^m$ and $m' \leq m$ we denote the hypercharacteristic set of $a$ by $\charac_{m^\prime}(a)$ and define it as the set of all $z \in T^* \rr d \setminus 0$ such that $z$ is not non-hypercharacteristic. Note that $\charac_{m''}(a) \subseteq \charac_{m'}(a)$ when $m'' \leq m'$. The special case $\charac_{m}(a) = \charac (a)$ is called the characteristic set and was defined in \cite{Hormander1}. 
\begin{rem}
Note that
\begin{equation*}
\mu \supp (a) \bigcup \left( \bigcap_{m' \leq m} \charac_{m'} (a) \right) = T^* \rr d \setminus 0, \quad a \in G^m,
\end{equation*}
and
$\charac_{m^\prime}(a)=\emptyset \Leftrightarrow a\in HG^{m,m^\prime}.$
\end{rem}

A class of operators related to pseudodifferential operators is that of localization operators, also called anti-Wick-quantized operators \cite{Cordero1,Shubin1}. 
For $a\in \cS'(\rr{2d})$ we define the corresponding localization operator $A_a$ weakly by its action on $f,g\in\cS(\rr{d})$ via 
\begin{equation*}
(A_a f,g) = (2\pi)^{-d}(a V_{\psi_0}f,V_{\psi_0}g) = (2\pi)^{-d} (V_{\psi_0}^* a V_{\psi_0}f,g), 
\end{equation*}
that is, $A_a = (2\pi)^{-d} V_{\psi_0}^* a V_{\psi_0}$.
The next result (cf. \cite[Section~1.7.2]{Nicola1}) says that $A_a$ can be written as a Weyl pseudodifferential operator with a symbol that is the convolution of $a$ and a Gaussian. 

\begin{prop}[the Weyl--Wick connection]\label{WW}
If $a\in \cS'(\rr{2d})$ then $A_a=b^w(x,D)$ where
\begin{equation}
\label{eq:weylwick}
b=\pi^{-d} e^{-|\cdot|^2}*a. 
\end{equation} 
If $a\in G^m$ then $b\in G^m$, 
\begin{equation}
\label{eq:weylwickexp}
b\sim\sum_{\alpha\in\nn{2d}} c_{\alpha} \partial^\alpha a 
\end{equation} 
with $c_{0}=1$, 
$\charac_{m'} (b) \subseteq \charac_{m'}(a)$ when $m' \leq m$, 
and $\mu\supp(b) \subseteq \mu\supp(a)$.
\end{prop}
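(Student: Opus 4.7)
The plan is to establish the five claims in order: the explicit form of the Weyl symbol, its membership in $G^m$, the asymptotic expansion \eqref{eq:weylwickexp}, and the two microlocal containments. For the Weyl symbol identity, I would compute the Schwartz kernel of $V_{\psi_0}^* a V_{\psi_0}$ directly,
\begin{equation*}
K(x,y) = \int_{\rr{2d}} a(z) \, \Pi(z)\psi_0(x) \, \overline{\Pi(z)\psi_0(y)} \, \dd z,
\end{equation*}
and extract the Weyl symbol via the standard formula $b(x,\xi) = (2\pi)^{-d} \int K(x+t/2, x-t/2) e^{-it\cdot\xi} \, \dd t$. Writing $z=(u,\eta)$ and interchanging orders of integration, the $t$-integration yields the translated Wigner distribution $W\psi_0(x-u,\xi-\eta)$. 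A direct Gaussian Fourier integral gives $W\psi_0(z) = 2^d e^{-|z|^2}$, whence $b = \pi^{-d} e^{-|\cdot|^2} * a$.

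For the regularity statement and the expansion, differentiation under the integral produces $\partial^\alpha b(z) = \pi^{-d} \int e^{-|w|^2} \partial^\alpha a(z-w) \, \dd w$; combined with Peetre's inequality $\eabs{z-w}^{r} \lesssim \eabs{z}^{r} \eabs{w}^{|r|}$ and Gaussian decay, this immediately yields $b \in G^m$. For the asymptotic expansion \eqref{eq:weylwickexp}, I would expand $a(z-w)$ in a Taylor polynomial about $z$ of order $N-1$ with integral remainder, integrate against $e^{-|w|^2}$, and identify
\begin{equation*}
c_\alpha = \frac{(-1)^{|\alpha|}}{\alpha! \, \pi^d} \int_{\rr{2d}} e^{-|w|^2} w^\alpha \, \dd w.
\end{equation*}
Then $c_0 = 1$ and $c_\alpha = 0$ for $|\alpha|$ odd by symmetry, while the Taylor remainder lies in $G^{m-N}$ by the same Peetre--Gaussian argument.

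For the two microlocal containments, fix $z_0 \notin \mu\supp(a)$ (respectively $z_0 \notin \charac_{m'}(a)$) and a conic open neighborhood $\Gamma$ of $z_0$ on which the relevant estimates for $a$ hold. Choose a slightly smaller open cone $\Gamma' \ni z_0$ with $\overline{\Gamma' \cap S_{2d-1}} \subset \Gamma$ together with a $\delta > 0$ such that $z \in \Gamma'$ and $|w| \leq \delta|z|$ force $z-w \in \Gamma$. Split the convolution into the regions $|w| \leq \delta|z|$ and $|w| > \delta|z|$; the second piece is $O(\eabs{z}^{-N})$ for every $N$ by Gaussian decay against polynomial growth of $a$. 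For the microsupport claim, the first piece inherits the rapid decay of $a$ on $\Gamma$ since $\eabs{z-w} \gtrsim \eabs{z}$ there, and analogous estimates apply to $\partial^\alpha b$. For the hypocharacteristic claim, I would apply the expansion obtained above inside $\Gamma'$: the hypoellipticity estimates for $a$ on $\Gamma$ yield $\partial^\alpha a(z) = O(|a(z)| \eabs{z}^{-|\alpha|})$, hence $b(z) = a(z)\bigl(1 + O(\eabs{z}^{-1})\bigr)$ on $\Gamma'$, giving the lower bound $|b(z)| \gtrsim \eabs{z}^{m'}$; the derivative bound in \eqref{noncharlowerbound2} for $b$ follows analogously by expanding $\partial^\alpha b$.

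I expect the main obstacle to lie in the microlocal bookkeeping, specifically in choosing $\Gamma'$ and $\delta$ compatibly so that the Taylor-based expansion is genuinely governed by the behavior of $a$ on $\Gamma$ rather than by its worst-case global growth, and so that the Gaussian tail in the complementary region $|w| > \delta|z|$ remains negligible uniformly in $z \in \Gamma'$. Peetre's inequality combined with the strong decay of $e^{-|w|^2}$ is the recurring technical device that absorbs the polynomial factors throughout the argument.
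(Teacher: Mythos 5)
Your proposal is correct and follows essentially the same route as the paper: the $G^m$ membership via Peetre's inequality and Gaussian decay, the hypocharacteristic containment via the truncated expansion $b=\sum_{|\alpha|<N}c_\alpha\partial^\alpha a + (\text{remainder in }G^{m-N})$ with $N$ large enough that the remainder is dominated by $|a(z)|\eabs{z}^{-1}$ on the cone, and the microsupport containment via the split of the convolution into $|w|\leq\delta|z|$ and $|w|>\delta|z|$. The only difference is that you re-derive the Weyl--Wick formula and the asymptotic expansion (via the Wigner distribution of the Gaussian and Taylor expansion under the convolution), which the paper simply cites from Nicola--Rodino.
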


\begin{proof}
The formulas \eqref{eq:weylwick} and \eqref{eq:weylwickexp} are proved in \cite[Proposition~1.7.9 and Theorem~1.7.10]{Nicola1}, respectively (cf. \cite[Theorem~24.1]{Shubin1}). If $a\in G^m$ then $b\in G^m$ follows from \eqref{eq:weylwick} and Peetre's inequality.

To prove the statement about the hypercharacteristic set, let $m' \leq m$ and suppose $0 \neq z_0 \notin \charac_{m'}(a)$. 
Then there exists a conic open set $\Gamma \subseteq T^* \rr d \setminus 0$ and $C,C_\alpha,A>0$ such that $z_0 \in \Gamma$ 
and the estimates \eqref{noncharlowerbound2} are valid when $z \in \Gamma$. 
From the asymptotic expansion \eqref{eq:weylwickexp}, picking $n \geq m-m'+1$, we have
\begin{equation}\label{bseries}
b = \sum_{0 \leq |\alpha| \leq n-1} c_\alpha \pd \alpha a + a_n
\end{equation} 
where $a_n \in G^{m-n} \subseteq G^{m'-1}$. 

Using \eqref{noncharlowerbound2} this gives if $z \in \Gamma$ and $|z| \geq A$
\begin{equation}\label{denominator1}
\begin{aligned}
|b(z)| & = \left| a(z) + \sum_{0 < |\alpha| \leq n-1} c_\alpha \pd \alpha a (z) + a_n(z) \right| \\
& \geq |a(z)| - \left( \sum_{0 < |\alpha| \leq n-1} |c_\alpha| \, | \pd \alpha a(z)| + |a_n(z)| \right) \\
& \geq |a(z)| \left( 1 - C_1 \eabs{z}^{-1}  - \frac{|a_n(z)|}{|a(z)|} \right) \\
& \geq |a(z)| \left( 1 - C_2 \eabs{z}^{-1} \right)
\end{aligned}
\end{equation} 
for constants $C_1,C_2>0$. Possibly augmenting $A>0$ we thus have the estimate
\begin{equation*}
|b(z)| \geq C_3 \eabs{z}^{m'}, \quad z \in \Gamma, \quad |z| \geq A, 
\end{equation*} 
for $C_3>0$, thus confirming the first of the two estimates corresponding to \eqref{noncharlowerbound2} for $b$ and $z \in \Gamma$.  

Concerning the second estimate that must be shown in order to prove that $z_0 \notin \charac_{m'}(b)$, let $\beta \in \nn {2d}$. 
From \eqref{bseries} we estimate, 
again using \eqref{noncharlowerbound2}, for $z \in \Gamma$ and $|z| \geq A$,
\begin{equation}\label{nominator1}
\begin{aligned}
|\pd \beta b(z)| & \leq \sum_{0 \leq |\alpha| \leq n-1} |c_\alpha| \, | \partial^{\alpha+\beta} a (z) | + |\pd \beta a_n(z)| \\
& \lesssim |a(z)| \eabs{z}^{-|\beta|} \left( 1 + \frac{\eabs{z}^{m-n} }{|a(z)|} \right) \\
& \lesssim |a(z)| \eabs{z}^{-|\beta|} \left( 1 + \eabs{z}^{m-m'-n}  \right) \\
& \lesssim |a(z)| \eabs{z}^{-|\beta|}. 
\end{aligned}
\end{equation} 

Combining \eqref{denominator1} and \eqref{nominator1} yields finally, again after possibly augmenting $A>0$, 
\begin{equation*}
\left| \frac{\pd \beta b(z)}{b(z)} \right| \lesssim \eabs{z}^{-|\beta|}, \quad z \in \Gamma, \quad |z| \geq A. 
\end{equation*} 
This verifies the second estimate of  \eqref{noncharlowerbound2} for $b$ and $z \in \Gamma$, and thus we have proved $z_0 \notin \charac_{m'}(b)$
which completes the proof of $\charac_{m'}(b) \subseteq \charac_{m'} (a)$. 

Finally, to prove $\mu\supp(b) \subseteq \mu\supp(a)$ we let $0 \neq z_0 \notin \mu\supp(a)$. 
Thus there exists a conic open set $\Gamma \subseteq T^* \rr d \setminus 0$ containing $z_0$ such that \eqref{musuppdef} holds. 
Let $\Gamma' \subseteq T^* \rr d \setminus 0$ be an open cone such that $z_0 \in \Gamma'$ and $\overline{\Gamma' \cap S_{2d-1}} \subseteq \Gamma$. 
Let $\alpha \in \nn {2d}$ and $\ep>0$. We split the convolution integral as
\begin{equation*}
\pi^d \pd \alpha b(z) = \underbrace{\int_{\eabs{w} \leq \ep \eabs{z}} \pd \alpha a(z-w) \, e^{-|w|^2} \, dw}_{:= I_1}
+ \underbrace{\int_{\eabs{w} > \ep \eabs{z}} \pd \alpha a(z-w) \, e^{-|w|^2} \, dw}_{:= I_2}. 
\end{equation*} 
If $z \in \Gamma'$, $|z| \geq 1$ and $\eabs{w} \leq \ep \eabs{z}$ then $z-w \in \Gamma$ if $\ep>0$ is chosen sufficiently small. 
Using \eqref{musuppdef} this gives for any $N \geq 0$ 
\begin{equation}\label{I1estimate}
\begin{aligned}
|I_1| & \lesssim \int_{\eabs{w} \leq \ep \eabs{z}} \eabs{z-w}^{-N} \, e^{-|w|^2} \, dw 
\lesssim \eabs{z}^{-N} \int_{\eabs{w} \leq \ep \eabs{z}} \eabs{w}^{N} \, e^{-|w|^2} \, dw \\
& \lesssim \eabs{z}^{-N}, \quad z \in \Gamma', \quad |z| \geq 1. 
\end{aligned}
\end{equation} 
The remaining integral we estimate for any $N \geq 0$ as
\begin{equation}\label{I2estimate}
\begin{aligned}
|I_2| & \lesssim \int_{\eabs{w} > \ep \eabs{z}} \eabs{z-w}^{m-|\alpha|} \, e^{-|w|^2} \, dw \\
& \lesssim \eabs{z}^{-N} \int_{\eabs{w} > \ep \eabs{z}} \eabs{z}^{|m| +N} \, \eabs{w}^{|m|} \, e^{-|w|^2} \, dw \\
& \lesssim \eabs{z}^{-N} \int_{\eabs{w} > \ep \eabs{z}} \eabs{w}^{2|m| +N} \, e^{-|w|^2} \, dw \\
& \lesssim \eabs{z}^{-N}, \quad z \in \rr {2d}. 
\end{aligned}
\end{equation} 
Combining \eqref{I1estimate} and \eqref{I2estimate} shows that $z_0 \notin \mu\supp(b)$, which proves $\mu\supp(b) \subseteq \mu\supp(a)$. 
\end{proof}
The \textit{Shubin--Sobolev spaces} $Q^s(\rr d)$, $s \in \ro$, were introduced by Shubin \cite{Shubin1} (cf. \cite{Nicola1}).
They can be defined as the modulation space $M_s^{2,2} (\rr d)$ \cite{Grochenig1} 
\begin{equation*}
Q^s (\rr d)  = \{u \in \cS'(\rr d): \, \eabs{ \cdot }^s V_\fy u \in L^2(\rr {2d}) \} 
\end{equation*}
where $\fy \in \cS(\rr d) \setminus 0$ is fixed, with norm
\begin{equation*}
\| u \|_{Q^s} = \left\| \eabs{ \cdot }^s V_\fy u \right\|_{L^2(\rr {2d})} = \| u \|_{M_s^{2,2}}. 
\end{equation*}
Another $\fy \in \cS(\rr d) \setminus 0$ gives an equivalent norm.
 
According to  \cite[Lemma~2.3]{Boggiatto1} an equivalent norm can be formulated in terms of localization operators. 
To wit, we have 
\begin{equation*}
Q^s (\rr d)  = \{u \in \cS'(\rr d): \, A_{\eabs{\cdot}^s}  u \in L^2(\rr d) \} 
\end{equation*}
with norm
\begin{equation*}
\| u \|_{Q^s} = \left\| A_{\eabs{\cdot}^s} u \right\|_{L^2(\rr d)}. 
\end{equation*}

As a third alternative $Q^s$ may be defined by 
\begin{equation*}
Q^s (\rr d) = \{ u \in \cS'(\rr{d}):  \,\,a^w(x,D)u \in L^2(\rr d) \}
\end{equation*}
for any symbol $a\in HG^{s,s}$. 
Different choices of such a symbol and an associated (left) parametrix $P$ for $a^w(x,D)$, meaning $Pa^w(x,D)=I+R$ with $R:\cS'(\rr d) \mapsto \cS(\rr d)$ a continuous operator (regularizer), yield equivalent norms $\|u\|_{Q^s,a,R}=\|a^w(x,D)u\|_{L^2}+\| Ru\|_{L^2}$.

We list some well-known properties of the Shubin--Sobolev spaces, see \cite{Nicola1,Shubin1}.
\begin{prop}[Properties of Shubin--Sobolev spaces]\hspace{1pt}
\label{prop:Sobolevprop}
\begin{enumerate}
\item If $a\in G^m$ then $a^w(x,D)$ is a continuous map $Q^s(\rr d) \mapsto Q^{s-m}(\rr d)$ for $s \in \ro$.
\item If $m>0$, the embedding $Q^{s} (\rr d) \subseteq Q^{s-m} (\rr d)$ is compact for $s \in \ro$.
\item We have
\begin{equation*}
\bigcup_{s \in \ro} Q^s (\rr d) =\cS' (\rr d) \quad \bigcap_{s \in \ro} Q^s (\rr d) =\cS (\rr d).
\end{equation*}
\end{enumerate}
\end{prop}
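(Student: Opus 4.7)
The three assertions are standard consequences of the Shubin calculus, and I would treat them in order, relying mainly on the Weyl composition formula, $L^2$-boundedness of zero-order operators, and the spectral properties of the harmonic oscillator.

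For (1), I would reduce mapping properties between Shubin--Sobolev spaces to $L^2$-boundedness of $G^0$ operators. Fix $c \in HG^{s-m,s-m}$ and $d \in HG^{s,s}$ with left parametrices $P$, $P'$ satisfying $P c^w(x,D) = I + R$ and $P' d^w(x,D) = I + R'$ with $R,R'$ regularizing. The $Q^{s-m}$-norm of $a^w(x,D) u$ is equivalent to $\|c^w(x,D) a^w(x,D) u\|_{L^2}$ plus a smoothing remainder. By the Weyl composition property, $c^w a^w = (c \wpr a)^w$ with $c \wpr a \in G^s$. Inserting the identity $I = P' d^w - R'$ on the right, I can write $(c \wpr a)^w u = (c \wpr a)^w P' d^w u - (c \wpr a)^w R' u$; the operator $(c \wpr a)^w P'$ has symbol in $G^0$ (again by the calculus), hence is $L^2$-bounded by a Shubin-type Calder\'on--Vaillancourt result, while the remainder is regularizing and maps $\cS'$ boundedly into $L^2$. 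Combining these estimates yields $\|a^w u\|_{Q^{s-m}} \lesssim \|u\|_{Q^s}$.

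For (2), the strategy is to transfer compactness to $L^2$ via an explicit isomorphism. The harmonic oscillator $H = -\Delta + |\cdot|^2$ has Weyl symbol in $HG^{2,2}$, and its complex powers $H^{s/2}$ can be realized as Shubin pseudodifferential operators with symbols in $HG^{s,s}$ (via an approximate resolvent construction or the Hermite spectral calculus), giving isomorphisms $Q^s \simeq L^2$ for every $s \in \ro$. The inclusion $\iota: Q^s \hookrightarrow Q^{s-m}$ then becomes, up to these isomorphisms, the operator $H^{-m/2}: L^2 \to L^2$. Since $H$ is diagonalized by the Hermite basis with eigenvalues tending to infinity, $H^{-m/2}$ is compact as soon as $m > 0$, and this transfers back to compactness of $\iota$.

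For (3) I would use the STFT characterization. The inclusion $\bigcup_s Q^s \subseteq \cS'$ is trivial; conversely, any $u \in \cS'$ satisfies $|V_\fy u(z)| \lesssim \eabs{z}^N$ for some $N \in \no$, so $\eabs{\cdot}^s V_\fy u \in L^2(\rr {2d})$ once $s < -N - d$. The inclusion $\cS \subseteq \bigcap_s Q^s$ is immediate from $V_\fy u \in \cS(\rr {2d})$ for $u \in \cS$. For the converse, if $u \in Q^s$ for every $s$, then the identities expressing $\partial^\alpha V_\fy u$ as short-time Fourier transforms of $u$ with respect to modified Schwartz windows (together with polynomial prefactors) show that all derivatives of $V_\fy u$ lie in every weighted $L^2$ space, hence $V_\fy u \in \cS(\rr {2d})$, and the inversion formula places $u$ in $\cS(\rr d)$. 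The main obstacle in this program is the explicit realization of complex powers of $H$ inside the Shubin calculus in part (2); this is nontrivial but classical, and once granted, the other two statements follow in a routine manner.
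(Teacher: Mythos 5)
The paper offers no proof of this proposition at all: it presents the three items as ``well-known properties'' and simply cites \cite{Nicola1,Shubin1}. Your arguments are correct and are essentially the textbook ones from those references, so there is no gap to report, only a comparison of your route with the standard one. For (1), reducing to $L^2$-boundedness of zero-order Shubin operators via elliptic order reduction and parametrices is exactly the classical argument; the only point you gloss over is that the equivalent norm on $Q^{s-m}$ also contains the regularizer term $\|R\,a^w(x,D)u\|_{L^2}$, but this is harmless since $R\,a^w(x,D)$ is again regularizing and hence continuous from $Q^s\hookrightarrow\cS'(\rr d)$ into $\cS(\rr d)\subseteq L^2(\rr d)$. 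For (2), conjugating the embedding to $H^{-m/2}$ on $L^2$ via powers of the harmonic oscillator is Shubin's own approach, and you correctly identify the one nontrivial ingredient, namely that $H^{s/2}$ lies in $HG^{s,s}$ for all real $s$ (Shubin's theory of complex powers); an alternative that avoids complex powers entirely is the STFT/modulation-space argument of \cite{Boggiatto2}, where the embedding is intertwined with multiplication by $\eabs{\cdot}^{-m}$ on the reproducing-kernel subspace $V_\fy(Q^s)\subseteq L^2(\rr{2d})$ and compactness follows from the decay of the multiplier together with the reproducing identity. For (3), your STFT argument is correct; the one step worth making explicit is that passing from $\eabs{\cdot}^s V_\fy u\in L^2$ for all $s$ to the same statement for the modified windows appearing in the derivative identities requires the window-independence of the weighted $L^2$ condition, which follows from the convolution estimate $|V_\psi u|\lesssim |V_\fy u|\ast|V_\psi\fy|$ that the paper itself exploits in the proof of Proposition \ref{WFinvariance}. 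In short: where the paper delegates to the literature, you have reconstructed the standard proofs faithfully.
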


\section{The Sobolev--Gabor wave front set}\label{sec:wfintro}

The Gabor wave front set of $u \in \cS'(\rr d)$ is defined as (cf. \cite{Hormander1})
\begin{equation*}
\WF_G (u) = \bigcap_{a \in G^\infty: \, a^w(x,D) u \in \cS} \charac(a) \subseteq T^* \rr d \setminus 0. 
\end{equation*}
The following characterization was showed in \cite{Hormander1}. 
We have $0 \neq z_0 \notin \WF_G(u)$ if and only if there exists an open conic set $\Gamma \subseteq T^* \rr d \setminus 0$ such that $z_0 \in \Gamma$ and 
\begin{equation*}
\sup_{z \in \Gamma} \eabs{z}^N | V_{\psi_0} u(z)| < \infty, \quad N \geq 0. 
\end{equation*}
This means that $V_{\psi_0} u$ decays super-polynomially in $\Gamma$. 
The characterization is invariant under a change of window function from $\psi_0$ to any function $\psi \in \cS(\rr d) \setminus 0$ \cite{Rodino1}.

The Gabor wave front set differs from the usual $C^\infty$-wave front and its global counterpart, the scattering- or $\mathrm{SG}$-wave front set in several aspects. We refer the reader to \cite{Hormander0,Rodino1,Schulz1} for comparisons of these notions.
A list of important properties of the Gabor wave front set follows. 

\begin{enumerate}

\item If $u \in \cS'(\rr d)$ then $\WF_G(u) = \emptyset$ if and only if $u \in \cS (\rr d)$ \cite[Proposition 2.4]{Hormander1}.

\item If $u \in \mathscr S'(\rr d)$, $m \in \ro$ and $a \in G^m$ then
\begin{equation}\label{microlocalelliptic1}
\begin{aligned}
\WF_G( a^w(x,D) u) 
& \subseteq \WF_G(u) \cap \mu \supp (a) \\
& \subseteq \WF_G( a^w(x,D) u) \ \bigcup \ \charac (a). 
\end{aligned}
\end{equation}

\item We have 
\begin{equation}\label{symplecticinvarianceWFG}
\WF_G( \mu(\chi) u) = \chi \WF_G(u), \quad \chi \in \Sp(d, \ro), \quad u \in \cS'(\rr d), 
\end{equation}
where $\Sp(d,\ro)$ is the group of real symplectic matrices, and where 
$\chi \mapsto \mu(\chi)$ is the metaplectic representation \cite{Folland1}, that satisfies 
\begin{equation*}
\mu(\chi)^{-1} a^w(x,D) \, \mu(\chi) = (a \circ \chi)^w(x,D), \quad a \in \cS'(\rr {2d}). 
\end{equation*}

\end{enumerate}

These properties are proved in \cite{Hormander1,Rodino1}, except 
\begin{equation}\label{sharpening1}
\WF_G( a^w(x,D) u) \subseteq \mu \supp (a), \quad a \in G^m, \quad u \in \cS'(\rr d). 
\end{equation}
The inclusion 
\begin{equation*}
\WF_G( a^w(x,D) u) \subseteq \conesupp (a). 
\end{equation*}
is shown in \cite[Proposition~2.5]{Hormander1}. 

To prove the sharpening \eqref{sharpening1}, 
let $0 \neq z_0 \in T^* \rr d \setminus \mu \supp (a)$. Then $z_0 \in \Gamma$ where $\Gamma \subseteq T^* \rr d \setminus 0$ is an open cone and \eqref{musuppdef} holds. Let $b \in G^0$ satisfy $\supp (b) \subseteq \Gamma$ and $z_0 \notin \charac (b)$. 
Combining \eqref{calculuscomposition1} with \eqref{musuppdef} gives $b \wpr a \in \cS(\rr {2d})$. 
Hence $b^w(x,D) a^w(x,D) u \in \cS(\rr d)$ which shows that $z_0 \notin \WF_G (a^w(x,D) u)$. 
We have now proved \eqref{sharpening1}. 

Next we define the Sobolev--Gabor wave front set that is the main object of study in this paper. 
It was introduced by Nicola and Rodino \cite{Nicola2} and studied with respect to propagation of singularities for Schr\"odinger equations.

\begin{defn}
\label{def:wfgl}
If $u \in \cS'(\rr d)$ and $\fy \in \cS (\rr d) \setminus 0$ then the \emph{Sobolev--Gabor wave front set} $\WF_{Q^s}(u)$ of order $s \in \ro$ is defined as follows. For $z_0 \in T^* \rr d \setminus 0$ we have $z_0 \notin \WF_{Q^s}(u)$ if 
there exists an open cone $\Gamma\subseteq \rr {2d} \setminus 0$ containing $z_0$ such that 
$\eabs{\cdot}^s V_\varphi u$ restricted to $\Gamma$ belongs to $L^2(\rr {2d})$, i.e.
\begin{equation*}
\int_{\Gamma} \eabs{z}^{2s} | V_\varphi u(z)|^2 \, d z < \infty. 
\end{equation*}
\end{defn}
Obviously $\WF_{Q^s}(u) \subseteq \WF_G(u)$ is a closed conic subset of $T^* \rr d \setminus 0$, and for $t \geq s$ we have $\WF_{Q^s}(u) \subseteq \WF_{Q^t}(u)$.
Moreover we have $\WF_{Q^s}(u) = \emptyset$ if and only if $u \in Q^s(\rr d)$. 
Thus $\WF_{Q^s}(u)$ may be interpreted as the phase space directions in which a tempered distribution fails to belong to $Q^s$. 
This observation will be elaborated in Proposition \ref{characterizationWFQs}. 

\begin{prop}\label{WFinvariance}
If $u \in \cS'(\rr d)$ and $s \in \ro$ then $\WF_{Q^s}(u)$ does not depend on the window function $\fy \in \cS (\rr d) \setminus 0$.
\end{prop}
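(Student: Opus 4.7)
\medskip

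\noindent\textbf{Proof plan.} The strategy is the standard change-of-window trick familiar from modulation space theory. The starting point is the pointwise estimate
\begin{equation*}
|V_\psi u(z)| \lesssim \bigl( |V_\varphi u| * |V_\varphi \psi|^{\vee}\bigr)(z), \quad z \in \rr{2d},
\end{equation*}
valid for any $\varphi,\psi \in \cS(\rr d) \setminus 0$ with $(\varphi,\varphi) \neq 0$, where $F := |V_\varphi \psi|^{\vee} \in \cS(\rr{2d})$. This follows by plugging the inversion formula $u=(2\pi)^{-d}V_\varphi^* V_\varphi u$ into the definition of $V_\psi u$ and taking absolute values. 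Thus, by symmetry, it suffices to show: if the condition defining membership of $z_0$ in the complement of $\WF_{Q^s}(u)$ holds for the window $\varphi$, then it holds for the window $\psi$.

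Assume therefore that $\int_\Gamma \eabs{z}^{2s} |V_\varphi u(z)|^2\, dz < \infty$ for some open cone $\Gamma \ni z_0$. Shrink to an open conic neighbourhood $\Gamma'$ of $z_0$ with $\overline{\Gamma'\cap S_{2d-1}} \subseteq \Gamma \cap S_{2d-1}$, and split
\begin{equation*}
V_\varphi u = F_1 + F_2, \qquad F_1 = \chi_\Gamma V_\varphi u, \quad F_2 = (1-\chi_\Gamma) V_\varphi u.
\end{equation*}
Correspondingly $|V_\psi u| \lesssim |F_1|*F + |F_2|*F$, and I would bound each piece separately on $\Gamma'$.

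For the first piece, $\eabs{\cdot}^s F_1 \in L^2(\rr{2d})$ by hypothesis. Peetre's inequality gives $\eabs{z}^s \leq C \eabs{w}^s \eabs{z-w}^{|s|}$, so
\begin{equation*}
\eabs{z}^s (|F_1|*F)(z) \lesssim \bigl( \eabs{\cdot}^s |F_1| * \eabs{\cdot}^{|s|} F \bigr)(z),
\end{equation*}
and since $\eabs{\cdot}^{|s|} F \in L^1(\rr{2d})$, Young's inequality puts this in $L^2(\rr{2d})$, in particular in $L^2(\Gamma')$.

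The second piece requires the angular separation of $\Gamma'$ from $\rr{2d}\setminus \Gamma$: by compactness of $\overline{\Gamma'\cap S_{2d-1}}$ in the open set $\Gamma\cap S_{2d-1}$, a routine case split (on whether $|w| \leq |z|/2$ or not) gives a constant $c>0$ with
\begin{equation*}
\eabs{z-w} \geq c\bigl( \eabs{z} + \eabs{w} \bigr), \quad z \in \Gamma',\ |z| \geq 1,\ w \notin \Gamma.
\end{equation*}
Combining this with $F \in \cS$ and the a priori polynomial growth bound $|V_\varphi u(w)| \lesssim \eabs{w}^N$, I would split $\eabs{z-w}^{-2M}$ into factors of $\eabs{z}^{-M}\eabs{w}^{-M}$ (up to constants) and conclude $(|F_2|*F)(z) \lesssim \eabs{z}^{-M}$ for any $M$, uniformly on $\Gamma' \cap \{|z| \geq 1\}$. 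Hence $\eabs{\cdot}^s(|F_2|*F) \in L^2(\Gamma')$ as well, completing the proof.

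The only mildly delicate step is the angular separation estimate, which controls the off-diagonal piece $F_2 * F$; everything else is a routine application of Young's inequality and Peetre. No step truly qualifies as a hard obstacle, as the argument is the familiar one that underlies the window-independence of modulation space norms.
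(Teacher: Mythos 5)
Your proof is correct, and it rests on the same engine as the paper's --- the change-of-window inequality $|V_\psi u|\lesssim |V_\fy u|*|V_\psi\fy|$ --- but you organize the estimate differently. The paper keeps $V_\fy u$ intact, applies Minkowski's integral inequality to move the $L^2(\Gamma')$-norm inside the convolution, and then splits the resulting inner integral over three regions ($|z|\le 1$; $\eabs{w}\le\ep\eabs{z}$, where $z-w$ lands back in $\Gamma$; and the complementary region, where $\eabs{z}\lesssim\eabs{w}$), bounding each by a power of $\eabs{w}$ that is absorbed by the Schwartz decay of $V_\psi\fy$. You instead split the function $V_\fy u=F_1+F_2$ according to $\supp$ in $\Gamma$ or its complement, dispatch the $F_1$-term globally by Peetre plus Young's inequality $L^2*L^1\subseteq L^2$, and handle the $F_2$-term by the angular separation estimate $\eabs{z-w}\gtrsim\eabs{z}+\eabs{w}$ for $z\in\Gamma'$, $w\notin\Gamma$, which yields rapid decay of $|F_2|*|V_\psi\fy|$ on $\Gamma'$. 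Your version avoids Minkowski and makes the geometry (good directions handled by Young, bad directions by off-cone decay) slightly more transparent; the paper's version avoids introducing the cut-off $F_1,F_2$ and treats all regions uniformly through the single weight $\eabs{w}^{K}$. Both are complete. Two cosmetic points: $|V_\fy\psi|^{\vee}$ need not lie in $\cS(\rr{2d})$ (absolute values destroy smoothness), but only its rapid decay and weighted integrability are used, so nothing is lost; and in your $F_2$-estimate you should note (as is trivial) that the bounded set $\Gamma'\cap B_1$ contributes a finite amount to the $L^2(\Gamma')$-integral.
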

\begin{proof}
Suppose $\fy \in \cS (\rr d) \setminus 0$ and $0 \neq z_0 \notin \WF_{Q^s}(u)$ defined by means of $\fy \in \cS (\rr d)$. 
Then 
\begin{equation}\label{coneL2s0}
\int_{\Gamma} \eabs{z}^{2s} | V_\fy u(z)|^2 \, d z < \infty 
\end{equation}
for some open cone $\Gamma \subseteq T^* \rr d \setminus 0$ containing $z_0$. 
Let $\psi \in  \cS (\rr d) \setminus 0$ and let $\Gamma' \subseteq T^* \rr d \setminus 0$ be an open cone such that $z_0 \in \Gamma'$ and 
$\overline{\Gamma' \cap S_{2d-1}} \subseteq \Gamma$. 
We will show that 
\begin{equation}\label{coneL2s1}
\int_{\Gamma'} \eabs{z}^{2s} | V_\psi u(z)|^2 \, d z < \infty 
\end{equation}
which means that $\WF_{Q^s}(u)$ can be defined by $\psi$ as well as $\fy$, which is the claimed independence of window function. 

By \cite[Lemma 11.3.3]{Grochenig1} we have 
\begin{equation*}
| V_\psi u(z)| \leq (2 \pi)^{-d} \| \fy \|_{L^2}^{-2} |V_\fy u| * |V_\psi \fy| (z), \quad z \in \rr {2d}. 
\end{equation*}
Denoting $g := V_\psi \fy \in \cS(\rr {2d})$, this and Minkowski's inequality give
\begin{equation}\label{coneL2s2}
\begin{aligned}
& \int_{\Gamma'} \eabs{z}^{2s} | V_\psi u(z)|^2 \, d z 
\lesssim \int_{z \in \Gamma'} \left( \int_{w \in \rr {2d}} \eabs{z}^{s} \, | V_\fy u(z-w)| \, |g (w)| \, dw \right)^2 \, dz \\
& \leq \left( \int_{w \in \rr {2d}}  |g(w)| \left( \int_{z \in \Gamma'} \eabs{z}^{2s} \, | V_\fy u(z-w)|^2 \, dz \right)^{1/2} \, dw \right)^2.
\end{aligned}
\end{equation}
For $\ep>0$ we split the region of integration in the inner integral as 
\begin{align*}
\int_{z \in \Gamma'} \eabs{z}^{2s} \, | V_\fy u(z-w)|^2 \, dz 
& = I_1+I_2+I_3,
\end{align*}
where $I_1$ is the integral over $\Gamma' \cap B_1$, $I_2$ that over all $z \in \Gamma' \setminus B_1$ that satisfy 
$\eabs{w}\leq \ep \eabs{z}$
and $I_3$ is the integral over the remainder of $\Gamma'$.
We estimate $I_1$, $I_2$ and $I_3$ separately. 

Since for some $M \geq 0$ 
\begin{equation*}
| V_\fy u(z)| \lesssim \eabs{z}^M, \quad z \in \rr {2d}, 
\end{equation*}
Peetre's inequality allows us to estimate 
$I_1 \lesssim \eabs{w}^{2M}$. 

Concerning $I_2$, $z \in \Gamma'$, $|z| \geq 1$ and $\eabs{w} \leq \ep \eabs{z}$ together imply $z-w \in \Gamma$ provided $\ep>0$ is sufficiently small. 
Thus, again using Peetre's inequality, 
\begin{align*}
I_2 & = \int_{z \in \Gamma', \, |z| \geq 1, \, \eabs{w} \leq \ep \eabs{z} } \eabs{z}^{2s} \, | V_\fy u(z-w)|^2 \, dz \\
& \lesssim  \eabs{w}^{2|s|} \int_{z \in \Gamma', \, |z| \geq 1, \, \eabs{w} \leq \ep \eabs{z} } \eabs{z-w}^{2s} \, | V_\fy u(z-w)|^2 \, dz \\
& \leq  \eabs{w}^{2|s|} \int_{z \in \Gamma} \eabs{z}^{2s} \, | V_\fy u(z)|^2 \, dz \\
& \lesssim \eabs{w}^{2|s|},
\end{align*}
in the final step using \eqref{coneL2s0}. 

Finally we estimate $I_3$: 
\begin{align*}
I_3 & = \int_{z \in \Gamma', \, |z| \geq 1, \, \eabs{w} > \ep \eabs{z} } \eabs{z}^{2s} \, | V_\fy u(z-w)|^2 \, dz \\
& \lesssim  \int_{z \in \Gamma', \, |z| \geq 1, \, \eabs{w} > \ep \eabs{z} } \eabs{z}^{2s} \, \eabs{z-w}^{2M} \, dz \\
& \lesssim  \eabs{w}^{2M} \int_{z \in \Gamma', \, |z| \geq 1, \, \eabs{w} > \ep \eabs{z} } \eabs{z}^{2s+2M+ 2 d+1} \, \eabs{z}^{-2 d - 1}  \, dz \\
& \lesssim  \eabs{w}^{2M + 2s+2M+ 2 d+1} \int_{\rr {2d}}  \eabs{z}^{-2 d - 1}  \, dz \\
& \lesssim  \eabs{w}^{4M + 2s+ 2 d+1}.  
\end{align*}

Thus $I_1$, $I_2$ and $I_3$ are each bounded by some power of $\eabs{w}$, which inserted into \eqref{coneL2s2}, keeping in mind $g \in \cS(\rr {2d})$, shows that \eqref{coneL2s1} holds as claimed. 
\end{proof}

As a consequence of this result we note that the symplectic invariance 
\eqref{symplecticinvarianceWFG} for the Gabor wave front set extends to the Sobolev--Gabor wave front set of any order $s \in \ro$. 
In fact, by \cite[Eq.~(3.15)]{Wahlberg1} we have 
\begin{equation*}
\left| V_{\mu(\chi) \fy} \left( \mu(\chi) u \right)(\chi z) \right| = \left| V_\fy u (z) \right|
\end{equation*}
for $\fy \in \cS (\rr d)$, $u \in \cS'(\rr d)$, $\chi \in \Sp(d,\ro)$ and $z \in \rr {2d}$, 
and $\mu(\chi)$ is a continuous operator on $\cS(\rr d)$ (cf. \cite[Proposition~4.27]{Folland1}).

In the following we discuss how the Sobolev--Gabor wave front set may be characterized by means of pseudodifferential and localization operators. 
We first deduce a series of results that assure that the involved symbols may be modified as long as they fulfill certain support and hypoellipticity criteria around a given point of interest.

First we recall that if $m' \leq m$ and $a\in HG^{m,m^\prime}$, then there exists a parametrix $p\in HG^{-m^\prime,m}$ such that $p \wpr a-1\in\cS(\rr {2d})$ and $a \wpr p-1\in\cS(\rr {2d})$ (cf. \cite[Theorem~25.1]{Shubin1}). This result may be micro-localized in the following sense.

\begin{prop}\label{microelliptic}
Suppose $a \in G^m$ and $\charac_{m^\prime} (a) \neq T^* \rr d \setminus 0$ for some $m' \leq m$. 
Let $\Gamma \subseteq T^* \rr d \setminus 0$ be a closed conic set such that 
$\charac_{m'} (a) \cap \Gamma=\emptyset$. 
Then there exists $A>0$ such that for any 
$\chi \in G^0$ with $\supp(\chi) \subseteq \Gamma \setminus B_A$,
there exists $b \in G^{-m'}$ such that 
\begin{equation*}
b \wpr a = \chi + r 
\end{equation*}
where $r \in \cS(\rr {2d})$. 

Under the additional assumption 
$\Gamma \setminus \charac(\chi) \neq \emptyset$,
$b$ may be chosen to satisfy $\charac_{-m} (b) \cap \Gamma'=\emptyset$
for any open cone $\Gamma' \subseteq T^* \rr d \setminus 0$ such that $\overline{\Gamma' \cap S_{2d-1}} \subseteq \Gamma \setminus \charac(\chi)$. 
\end{prop}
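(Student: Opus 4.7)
The plan is to build $b$ as a microlocal Shubin--Weyl parametrix of $a$ localized by $\chi$, following the classical iterative scheme in which every division by $a$ takes place on a region where $a$ satisfies the estimates \eqref{noncharlowerbound2} of order $m'$. First I would choose $A>0$ so that these hypoellipticity estimates for $a$ hold uniformly on $\Gamma\cap\{|z|\geq A\}$: since $\charac_{m'}(a)\cap\Gamma=\emptyset$, every point of $\Gamma\cap S_{2d-1}$ admits a conic neighborhood on which \eqref{noncharlowerbound2} holds, and compactness of $\Gamma\cap S_{2d-1}$ yields a uniform $A$. In particular $a\neq 0$ on $\supp(\chi)\subseteq \Gamma\setminus B_A$.

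Given such $\chi$, set $b_0:=\chi/a$, where $1/a$ is extended smoothly across the complement of $\supp(\chi)$. The hypoellipticity of $a$ combined with the Leibniz rule yields $b_0\in G^{-m'}$ with $\supp(b_0)\subseteq\supp(\chi)$. By the Weyl expansion \eqref{calculuscomposition1}, $b_0\wpr a-\chi$ is an asymptotic sum of contributions of order $m-m'-2k$ (the $|\alpha+\beta|=k$ terms of \eqref{calculuscomposition1}), each supported in $\supp(\chi)$. I then define recursively $b_j:=-e_j/a\in G^{-m'-2j}$, where $e_j\in G^{m-m'-2j}$ collects all contributions of order $m-m'-2j$ in $(\sum_{k<j}b_k)\wpr a-\chi$. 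Taking an asymptotic sum $b\sim\sum_j b_j\in G^{-m'}$ via the construction recalled earlier in the paper, iteration of the cancellation gives $b\wpr a-\chi\in\bigcap_N G^{-N}=\cS(\rr{2d})$, which proves the first assertion.

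For the second part, assume $\Gamma\setminus\charac(\chi)\neq\emptyset$ and let $\Gamma'$ be as stated. A covering argument identical to the one used for $a$ produces $A'\geq A$ on which $\chi$ satisfies the hypoellipticity estimates \eqref{noncharlowerbound2} of order $0$. Applying Leibniz to $b_0=\chi\cdot(1/a)$ together with the hypoellipticity of both factors yields
\[
|b_0(z)|=|\chi(z)|/|a(z)|\gtrsim \eabs{z}^{-m},\qquad |\pd{\alpha}b_0(z)|\lesssim |b_0(z)|\eabs{z}^{-|\alpha|},\qquad z\in\Gamma',\ |z|\geq A',
\]
so $b_0$ itself satisfies \eqref{noncharlowerbound2} of order $-m$ on $\Gamma'$. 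An induction on $j$, exploiting the explicit combinatorial formula for $e_j$ (a sum of products $\pd{\gamma}b_k\cdot\pd{\delta}a$ with $k+|\gamma|+|\delta|=j$) together with the hypoellipticity of $a$, sharpens the global bound $b_j\in G^{-m'-2j}$ to the relative bound $|\pd{\alpha}b_j(z)|\lesssim |b_0(z)|\eabs{z}^{-j-|\alpha|}$ on $\Gamma'$. Truncating $b\sim\sum_j b_j$ at $N$ large enough that $-m'-2N\leq -m-1$, the tail $b-\sum_{j<N}b_j\in G^{-m'-2N}$ is $o(|b_0|)$ on $\Gamma'$, while the finite sum is bounded below by $|b_0|/2$ and controlled above termwise in every derivative by $C_\alpha|b_0|\eabs{z}^{-|\alpha|}$. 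Hence, after possibly enlarging $A'$, $b$ itself fulfills \eqref{noncharlowerbound2} of order $-m$ on $\Gamma'$, giving $\charac_{-m}(b)\cap\Gamma'=\emptyset$.

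The main technical obstacle is the inductive step producing $|\pd{\alpha}b_j|\lesssim |b_0|\eabs{z}^{-j-|\alpha|}$ on $\Gamma'$: the estimate implied by $b_j\in G^{-m'-2j}$ alone does not control $b_j$ relative to $|b_0|\gtrsim\eabs{z}^{-m}$ when $m>m'$. The remedy is to consistently invoke the hypoellipticity estimates rather than the plain symbol estimates throughout, so that each division by $a$ contributes a factor $1/|a|=|b_0|/|\chi|\lesssim |b_0|$, while each Weyl differentiation supplies the additional $\eabs{z}^{-1}$ needed for the claimed weight.
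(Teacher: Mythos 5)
Your construction is essentially the paper's: the same microlocal parametrix $b_0=\chi/a$, the same recursive corrections controlled by the relative (hypoellipticity) estimates on $\supp(\chi)$, the same asymptotic summation, and for the second claim the same lower bound on $|b_0|$ coming from $|\chi|\geq\ep$ on $\Gamma'$ together with relative bounds on the $b_j$ and a truncated tail of order at most $-m$. The only wrinkle is that your intermediate assertion $e_j\in G^{m-m'-2j}$ does not by itself yield $b_j=-e_j/a\in G^{-m'-2j}$ when $m>m'$; but your closing paragraph correctly identifies that the hypoelliptic estimates must be used throughout, giving $e_j=O(\eabs{z}^{-2j})$ on $\supp(\chi)$, which is precisely the paper's bookkeeping (its remainders satisfy $r_j\in G^{-2(j+1)}$ with support in $\supp(\chi)$).
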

\begin{proof}
The proof follows along the lines of that of \cite[Theorem~2.3.3]{Cordes1}, where the analysis is carried out for the  class of $\mathrm{SG}$-symbols that are micro-hypoelliptic in a certain sense.

Let us briefly recall the main steps of the parametrix construction. 
As a first approximation set $b_0:= a^{-1} \chi$. The estimates 
\begin{equation*}
|\partial^\alpha (a^{-1}) (z)| \leq C_\alpha |a(z)|^{-1} \eabs{z}^{-|\alpha|}, \quad \alpha \in \nn {2d}, \quad z \in \Gamma, \quad |z| \geq A, 
\end{equation*}
are consequences of the hypoellipticity estimates \eqref{noncharlowerbound2} and induction.   
They imply the estimates
\begin{equation*}
|\partial^\alpha b_0 (z)| \leq C_\alpha |a(z)|^{-1} \eabs{z}^{-|\alpha|}, \quad \alpha \in \nn {2d}, \quad |z| \geq A, 
\end{equation*}
and consequently $b_0 \in G^{-m'}$. 

Then, by \eqref{calculuscomposition1} and again the hypoellipticity estimates \eqref{noncharlowerbound2} it follows that $b_0\wpr a=\chi + r_0 + r_{0,\cS}$ with $r_0\in G^{-2}$ satisfying $\supp(r_0) \subseteq \supp(\chi)$ and $r_{0,\cS}\in\cS(\rr {2d})$. Subsequently, setting $b_1:=-a^{-1}r_0$, we notice that 
we obtain the estimates
\begin{equation*}
|\partial^\alpha b_1 (z)| \leq C_\alpha |a(z)|^{-1} \eabs{z}^{-2-|\alpha|}, \quad \alpha \in \nn {2d}, \quad |z| \geq A, 
\end{equation*}
and consequently $b_1\in G^{-m'-2}$. 
This gives
\begin{equation*}
(b_0+b_1) \wpr a = \chi + r_0 + r_{0,\cS} -r_0 + r_1 + r_{1,\cS} = \chi + r_1 + r_{0,\cS} + r_{1,\cS}
\end{equation*}
with $r_1\in G^{-4}$, $\supp(r_1) \subseteq \supp(\chi)$ and $r_{1,\cS}\in\cS(\rr {2d})$. Constructing in this way recursively  
$b_{j+1}:=-a^{-1} r_j \in G^{-m'-2(j+1)}$ and $r_{j+1} \in G^{-2(j+2)}$ with $\supp(r_{j+1}) \subseteq \supp(\chi)$, $j=1,2,\dots$, one obtains a sequence of symbols $(b_j)_{j \geq 0}$.
Finally set $b\sim \sum_{j=0}^\infty b_j\in G^{-m^\prime}$. The symbol $b$ satisfies $b \wpr a = \chi + r$ with $r \in \cS(\rr {2d})$.
This concludes the first part of the proof. 

It remains to verify the second claim, under the additional assumption $\Gamma \setminus \charac(\chi) \neq \emptyset$. 
This assumption implies $|\chi(z)| \geq \ep > 0$ when $z \in \Gamma' \setminus B_{A'}$ for some $A'>A$ 
for any open cone $\Gamma' \subseteq T^* \rr d \setminus 0$ such that $\overline{\Gamma' \cap S_{2d-1}} \subseteq \Gamma \setminus \charac(\chi)$. 

We note the following properties that hold due to the construction of $b_j \in G^{-m^\prime-2j}$, $j=0,1,2,\dots$ in the first part of the proof, 
together with the new assumptions on $\chi$. 
We have
\begin{equation}\label{b0lowerbound}
| b_0 (z)| \geq \ep  |a(z)|^{-1}, \quad z \in \Gamma', \quad |z| \geq A', 
\end{equation}
and
\begin{equation}\label{bjproperties}
|\partial^\alpha b_j (z)| \lesssim |a(z)|^{-1} \eabs{z}^{-2j-|\alpha|}, \quad \alpha \in \nn {2d}, \quad z \in \Gamma', \quad |z| \geq A', \quad j=0,1,2,\dots.   
\end{equation} 

Pick an integer $n > (m-m')/2$. We have  
\begin{equation*}
b = b_0 + \sum_{j=1}^{n-1} b_j + b_{n,r}
\end{equation*}
where $b_{n,r} \in G^{-m'-2n} \subseteq G^{-m}$. 

We obtain from \eqref{b0lowerbound} and \eqref{bjproperties} for some $C,C_1,C_2>0$
\begin{equation}\label{estdenom1}
\begin{aligned}
|b(z)| & \geq |b_0(z)| - \sum_{j=1}^{n-1} |b_j(z)| - |b_{n,r}(z)| \\
& \geq \ep |a(z)|^{-1} \left( 1 - C \sum_{j=1}^{n-1} \eabs{z}^{-2j} - C |a(z) |\eabs{z}^{-m'-2n} \right) \\
& \geq \ep |a(z)|^{-1} \left( 1 - C \sum_{j=1}^{n-1} \eabs{z}^{-2j} - C \eabs{z}^{m-m'-2n} \right) \\
& \geq C_1 |a(z)|^{-1} \\
& \geq C_2 \eabs{z}^{-m}, \quad z \in \Gamma', \quad |z| \geq A', 
\end{aligned}
\end{equation}
after possibly increasing $A'>0$. 

Secondly we have the estimate for any $\alpha \in \nn {2d}$, again using  \eqref{bjproperties}, 
\begin{equation}\label{estnom1}
\begin{aligned}
|\pd \alpha b(z)| & \leq \sum_{j=0}^{n-1} | \pd \alpha b_j(z)| + | \pd \alpha b_{n,r}(z)| \\
& \lesssim |a(z)|^{-1}  \eabs{z}^{-|\alpha|} \left( \sum_{j=0}^{n-1} \eabs{z}^{-2j} + |a(z) |\eabs{z}^{-m'-2n} \right) \\
& \lesssim |a(z)|^{-1}  \eabs{z}^{-|\alpha|} \left( \sum_{j=0}^{n-1} \eabs{z}^{-2j} + \eabs{z}^{m-m'-2n} \right) \\
& \lesssim |a(z)|^{-1}  \eabs{z}^{-|\alpha|}, \quad z \in \Gamma', \quad |z| \geq A'.
\end{aligned}
\end{equation}
Combining \eqref{estdenom1} and \eqref{estnom1} yields
\begin{equation*}
\left| \frac{\pd \alpha b(z)}{b(z)} \right| \lesssim \eabs{z}^{-|\alpha|}, \quad z \in \Gamma', \quad |z| \geq A', \quad \alpha \in \nn {2d}. 
\end{equation*}
This completes the verification of $\charac_{-m} (b) \cap \Gamma' = \emptyset$.
\end{proof}
\begin{lem}\label{lem:pdomod}
Let $u\in\cS'(\rr d)$, $s \in \ro$, $z_0 \in T^* \rr d \setminus 0$ and suppose $a \in G^0$ satisfies $z_0 \notin \charac(a)$ and $a^w(x,D)u\in Q^s(\rr{d})$. 
Then there exists an open conic set $\Gamma \subseteq T^* \rr d \setminus 0$ containing $z_0$ such that 
$b^w(x,D) u \in Q^s(\rr{d})$ for any $b \in G^0$ such that $\supp(b) \subseteq \Gamma$. 
\end{lem}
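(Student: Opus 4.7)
The plan is to construct a microlocal parametrix for $a^w(x,D)$ near $z_0$ via Proposition \ref{microelliptic}, and then use it to express $b^w(x,D) u$ in terms of $a^w(x,D) u \in Q^s$ modulo a Schwartz-class error. Since $z_0 \notin \charac(a) = \charac_0(a)$, the symbol $a$ is non-hypercharacteristic of order $0$ (i.e., elliptic) in some open conic neighborhood of $z_0$. I would fix three nested open conic sets containing $z_0$, call them $\Gamma \subseteq \Gamma' \subseteq \Gamma_1$, with $\overline{\Gamma \cap S_{2d-1}} \subseteq \Gamma'$ and $\overline{\Gamma' \cap S_{2d-1}} \subseteq \Gamma_1$, all lying in this elliptic neighborhood, together with a cutoff $\chi \in G^0$ that equals $1$ on $\Gamma' \setminus B_A$ and is supported in $\Gamma_1 \setminus B_{A/2}$, with $A>0$ taken large enough to invoke Proposition \ref{microelliptic}. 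This produces $p \in G^0$ with
\begin{equation*}
p \wpr a = \chi + r, \qquad r \in \cS(\rr {2d}).
\end{equation*}

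Next I would verify that for any $b \in G^0$ with $\supp(b) \subseteq \Gamma$, the difference $b \wpr \chi - b$ is Schwartz. The leading term of the asymptotic expansion \eqref{calculuscomposition1} is $b\chi$, and $b - b\chi = b(1-\chi)$ is supported in $\supp(b) \cap \complement(\Gamma' \setminus B_A) \subseteq \Gamma \cap B_A$, a bounded set, hence Schwartz. Each higher-order term $D_x^\beta \partial_\xi^\alpha b \cdot D_x^\alpha \partial_\xi^\beta \chi$ with $|\alpha|+|\beta|>0$ is supported in $\supp(b) \cap \supp(D_x^\alpha \partial_\xi^\beta \chi)$; since $\chi$ is locally constant on $\Gamma' \setminus B_A \supseteq \Gamma \setminus B_A$, this intersection lies in $B_A$, so every cross term is compactly supported and hence Schwartz. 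The asymptotic expansion then yields $b \wpr \chi \equiv b \pmod{\cS(\rr {2d})}$.

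Combining with the parametrix identity gives $b \wpr p \wpr a = b \wpr \chi + b \wpr r$, and since $r \in \cS(\rr{2d})$ one has $b \wpr r \in \cS(\rr{2d})$, so $b = (b \wpr p) \wpr a + s$ with $s \in \cS(\rr{2d})$. Applying the Weyl calculus at $u$ yields
\begin{equation*}
b^w(x,D) u = (b \wpr p)^w(x,D)\, a^w(x,D) u + s^w(x,D) u.
\end{equation*}
Since $b \wpr p \in G^0$, Proposition \ref{prop:Sobolevprop}(1) makes $(b \wpr p)^w(x,D)$ continuous $Q^s \to Q^s$, and $a^w(x,D) u \in Q^s$ by hypothesis, while $s^w(x,D) u \in \cS(\rr d) \subseteq Q^s(\rr d)$. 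Hence $b^w(x,D) u \in Q^s(\rr d)$, with $\Gamma$ depending only on $a$ and $z_0$.

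The main delicacy is the support analysis in the middle step: showing that $b \wpr \chi \equiv b \pmod{\cS(\rr {2d})}$ hinges on controlling every term of the Weyl-product asymptotic expansion, which works only because the nesting $\overline{\Gamma \cap S_{2d-1}} \subseteq \Gamma'$ forces the derivatives of $\chi$ of positive order to vanish on $\supp(b)$ outside a compact set. Everything else reduces to the standard continuity of order-$0$ Shubin operators on Shubin--Sobolev scales.
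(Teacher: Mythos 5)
Your proposal is correct and follows essentially the same route as the paper's proof: both invoke Proposition \ref{microelliptic} to produce a microlocal parametrix together with a cutoff $\chi$ equal to $1$ on a conic neighborhood of $\supp(b)$ near infinity, both verify $b\wpr\chi \equiv b$ modulo $\cS(\rr{2d})$ by the same support analysis of the Weyl expansion, and both conclude via the $Q^s$-continuity of order-zero operators and the regularizing property of Schwartz-symbol operators. The only differences are cosmetic (three nested cones instead of two, and composing $b\wpr p$ into a single symbol rather than keeping $b^w c^w a^w$ as a product of operators).
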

\begin{proof}
By Proposition \ref{microelliptic} there exists an open conic set $\Gamma' \subseteq T^* \rr d \setminus 0$ containing $z_0$ and $A>0$
such that if $\chi \in G^0$ satisfies $\supp(\chi) \subseteq \Gamma' \setminus B_A$ 
then there exists $c \in G^0$ such that 
$c \wpr a = \chi + r$ with $r \in \cS(\rr {2d})$. 

Let $\Gamma \subseteq T^* \rr d \setminus 0$ be an open conic set such that $z_0 \in \Gamma$ and $\overline{\Gamma \cap S_{2d-1}} \subseteq \Gamma'$. 
Suppose $\chi \in G^0$ satisfy $\supp (\chi) \subseteq \Gamma' \setminus B_A$ and $\chi(z) = 1$ for $z \in \Gamma$ and $|z| \geq 2A$, 
and let $b \in G^0$ satisfy $\supp(b) \subseteq \Gamma$. 
 
Then $b (1-\chi) \in C_c^\infty(\rr {2d})$, and $b \wpr \chi = b \chi + r$ with $r \in \cS(\rr {2d})$ since $b(z) \chi(z) = b(z)$ when $|z| \geq 2A$.  
Consequently we have, with $r_j \in \cS(\rr {2d})$, $j=1,2,3$, 
\begin{align*}
b^w(x,D) & = (b \chi)^w(X,D) + r_1^w(x,D) \\
& = b^w(x,D) \, \chi^w(X,D) + r_2^w(x,D) \\
& = b^w(x,D) \, c^w(X,D) \, a^w(x,D) + r_3^w(x,D). 
\end{align*}
The result now follows from $a^w(x,D) u \in Q^s(\rr d)$, the fact that operators of order zero are continuous on $Q^s(\rr d)$, 
and the fact that operators with Weyl symbols in $\cS(\rr {2d})$ are regularizing. 
\end{proof}

We have now reached a point where we can characterize the Sobolev--Gabor wave front set by means of pseudodifferential and localization operators. 

\begin{prop}\label{characterizationWFQs}
Let $u \in \cS'(\rr d)$ and $s \in \ro$. The following are equivalent:
\begin{enumerate}
\item $0 \neq z_0\notin \WF_{Q^s}(u)$;
\item There exists $b\in G^0$ such that $z_0 \notin \charac(b)$ and $b^w(x,D) u \in Q^s (\rr d)$; 
\item There exists $a\in G^s$ such that $z_0 \notin \charac(a)$ and $A_a u \in L^2(\rr d)$.
\end{enumerate}
\end{prop}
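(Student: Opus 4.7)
The plan is to establish the cyclic chain of implications $(1) \Rightarrow (3) \Rightarrow (2) \Rightarrow (1)$.

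The direction $(1) \Rightarrow (3)$ is essentially a direct computation. Given an open cone $\Gamma \ni z_0$ on which $\eabs{\cdot}^s V_{\psi_0} u$ is square-integrable, I would pick $\chi \in G^0$ supported in $\Gamma$ and elliptic at $z_0$, and set $a = \chi \eabs{\cdot}^s \in G^s$; this symbol satisfies $z_0 \notin \charac(a)$. Since $V_{\psi_0}^* : L^2(\rr{2d}) \to L^2(\rr d)$ is bounded by the Moyal identity and the symbol $a$ is supported in $\Gamma$, the formula $A_a = (2\pi)^{-d} V_{\psi_0}^* a V_{\psi_0}$ gives
\begin{equation*}
\|A_a u\|_{L^2} \lesssim \|a V_{\psi_0} u\|_{L^2(\rr{2d})} \lesssim \left( \int_\Gamma \eabs{z}^{2s} |V_{\psi_0} u(z)|^2 \, dz \right)^{1/2} < \infty.
\end{equation*}

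For $(3) \Rightarrow (2)$, I would combine the Weyl--Wick connection with the microlocal parametrix. By Proposition \ref{WW}, $A_a = b^w(x,D)$ with $b \in G^s$ and $\charac_s(b) \subseteq \charac_s(a) = \charac(a)$, whence $z_0 \notin \charac_s(b)$. Proposition \ref{microelliptic}, applied with $m = m' = s$, then produces an open cone $\Gamma \ni z_0$, a cutoff $\chi \in G^0$ elliptic at $z_0$ with $\supp \chi \subseteq \Gamma \setminus B_A$, and a parametrix $p \in G^{-s}$ such that $p \wpr b = \chi + r$ with $r \in \cS(\rr{2d})$. Since $p^w(x,D)$ maps $L^2 \to Q^s$ continuously by Proposition \ref{prop:Sobolevprop}, and $b^w(x,D) u = A_a u \in L^2$, the identity $\chi^w(x,D) u = p^w(x,D) b^w(x,D) u - r^w(x,D) u$ places $\chi^w(x,D) u$ in $Q^s(\rr d)$, which is $(2)$ with the symbol $\chi$.

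The main obstacle is the final implication $(2) \Rightarrow (1)$, where an abstract membership $b^w(x,D) u \in Q^s$ must be converted to a local $L^2$-bound with polynomial weight on the STFT. I would proceed in two stages. First, Lemma \ref{lem:pdomod} upgrades $(2)$ to the more flexible statement that there is an open cone $\Gamma \ni z_0$ such that $c^w(x,D) u \in Q^s$ for \emph{every} $c \in G^0$ with $\supp c \subseteq \Gamma$. Choosing a nested cone $\Gamma' \ni z_0$ with $\overline{\Gamma' \cap S_{2d-1}} \subseteq \Gamma$ and $c \in G^0$ supported in $\Gamma$ with $c(z) = 1$ on $\Gamma' \setminus B_R$ for some $R > 0$, I split $u = c^w(x,D) u + (1-c)^w(x,D) u$. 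The first summand lies in $Q^s$, so $z_0 \notin \WF_{Q^s}(c^w(x,D) u)$. For the second, the choice of $c$ forces all derivatives of $1-c$ to vanish in a conic neighborhood of $z_0$ for large arguments, so $z_0 \notin \mu\supp(1-c)$; the sharpening \eqref{sharpening1} then yields $z_0 \notin \WF_G((1-c)^w(x,D) u)$, i.e., super-polynomial decay of the STFT in a cone around $z_0$, which trivially implies $z_0 \notin \WF_{Q^s}((1-c)^w(x,D) u)$. The elementary subadditivity $\WF_{Q^s}(u_1 + u_2) \subseteq \WF_{Q^s}(u_1) \cup \WF_{Q^s}(u_2)$, which follows directly from Definition \ref{def:wfgl} and the triangle inequality applied on the intersection of the two cones of control, then yields $z_0 \notin \WF_{Q^s}(u)$ as required.
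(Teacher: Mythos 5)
Your proof is correct and follows the same overall architecture as the paper's: the cyclic chain $(1)\Rightarrow(3)\Rightarrow(2)\Rightarrow(1)$, with $(1)\Rightarrow(3)$ identical and $(2)\Rightarrow(1)$ built on the same decomposition $u = c^w(x,D)u + (1-c)^w(x,D)u$ after invoking Lemma \ref{lem:pdomod}. Two local substitutions are worth noting. In $(3)\Rightarrow(2)$ the paper simply applies the globally elliptic operator $(\eabs{\cdot}^{-s})^w(x,D)$ to $b^w(x,D)u = A_a u \in L^2$ and takes $\eabs{\cdot}^{-s}\wpr b \in G^0$ as the new symbol, whereas you route through the microlocal parametrix of Proposition \ref{microelliptic}; your version is heavier but more explicit about why the resulting order-zero symbol is noncharacteristic at $z_0$, a point the paper leaves implicit. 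In $(2)\Rightarrow(1)$ the paper disposes of the remainder term $V_\fy r^w(x,D)u$ by citing \cite[Corollary~3.9 and Remark~3.10]{Rodino1}, while you instead observe $z_0 \notin \mu\supp(1-c)$ and invoke the inclusion \eqref{sharpening1} proved earlier in the paper, together with the (easily verified) subadditivity of $\WF_{Q^s}$; this makes your argument slightly more self-contained but is substantively the same mechanism.
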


\begin{proof}
$(1)\Rightarrow (3)$: By definition
\begin{equation*}
\int_{\Gamma} \eabs{z}^{2s} | V_{\psi_0} u(z)|^2 \, d z < \infty
\end{equation*}
for some open conic set $\Gamma \subseteq T^* \rr d \setminus 0$ containing $z_0$. 
We may pick $a\in G^0$ supported in $\Gamma$ such that $z_0 \notin \charac (a)$ and conclude that 
the function $z \mapsto a(z)\eabs{z}^s V_{\psi_0} u(z)$ belongs to $L^2(\rr {2d})$. Since $V^*_{\psi_0}: L^2(\rr {2d}) \mapsto L^2(\rr d)$ is a continuous operator we consequently have
\begin{equation*}
(2\pi)^{-d} V^*_{\psi_0} \left( a \eabs{\cdot}^s V_{\psi_0} u\right) = A_{a \eabs{\cdot}^s} u \in L^2(\rr d).
\end{equation*}
Since multiplication by the elliptic symbol $\eabs{\cdot}^s$ is an isomorphism $G^0 \mapsto G^s$ 
and $z_0 \notin \charac(\eabs{\cdot}^s a)$ due to $z_0 \notin \charac(a)$,
we obtain (3).

$(3)\Rightarrow(2)$: By \eqref{eq:weylwick} we have $b^w(x,D)u=A_a u\in L^2(\rr{d})$ for $b=\pi^{-d} e^{-|\cdot|^2}*a \in G^s$
and  $z_0 \notin \charac(b)$. 
Claim (2) follows by applying $(\eabs{\cdot}^{-s})^w(x,D)$ to both sides.

$(2)\Rightarrow(1)$: Let $\fy \in \cS(\rr d) \setminus 0$. By Lemma \ref{lem:pdomod} we may assume that $b\in G^0$ satisfies $b(z) = 1$ when $z \in \Gamma$ and $|z| \geq 1$ where $\Gamma \subseteq T^* \rr d \setminus 0$ is an open conic neighborhood of $z_0$ and $b^w(x,D)u\in Q^s(\rr d)$. We write $1=b+r$, where $r\in G^0$ satisfies $r(z)=0$ for $z \in \Gamma$ and $|z| \geq 1$. Then 
\begin{equation*}
V_\fy u = V_\fy b^w(x,D) u + V_\fy r^w(x,D) u.
\end{equation*}
The assumption $b^w(x,D)u\in Q^s$ means
\begin{equation*}
\int_{\rr {2d}} \eabs{z}^{2s} | V_\varphi b^w(x,D) u (z)|^2 \, d z < \infty.
\end{equation*}
It thus suffices to show that $V_\psi r^w(x,D) u$, restricted to some open cone containing $z_0$, is of rapid decay. This follows from \cite[Corollary~3.9 and Remark~3.10]{Rodino1}.
\end{proof}

We are now in a position to compare the Sobolev--Gabor wave front set with the 
Gabor wave front set. 
We denote for a conical subset $\Gamma \subseteq T^* \rr d \setminus 0$ by $\overline \Gamma \subseteq T^* \rr d \setminus 0$ its closure in $T^* \rr d \setminus 0$. We have the following equality:
\begin{prop}\label{WFequality}
If $u\in\cS'(\rr{d})$ then 
\begin{equation*}
\WF_G(u) = \overline{\bigcup_{s \in \ro} \WF_{Q^s}(u)} \subseteq T^* \rr d \setminus 0.
\end{equation*}
\end{prop}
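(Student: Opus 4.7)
The inclusion $\overline{\bigcup_{s \in \ro} \WF_{Q^s}(u)} \subseteq \WF_G(u)$ is immediate from the already-noted pointwise containment $\WF_{Q^s}(u) \subseteq \WF_G(u)$ and the fact that $\WF_G(u)$ is a closed conic subset of $T^* \rr d \setminus 0$. For the reverse inclusion I proceed by contraposition: suppose $z_0 \in T^* \rr d \setminus 0$ is not in $\overline{\bigcup_{s} \WF_{Q^s}(u)}$; then there is an open conic neighborhood $\Gamma$ of $z_0$ with $\Gamma \cap \WF_{Q^s}(u) = \emptyset$ for every $s \in \ro$. I would select a narrower open cone $\Gamma_0 \ni z_0$ with $\overline{\Gamma_0 \cap S_{2d-1}} \subseteq \Gamma$, together with a symbol $\chi \in G^0$ that is identically $1$ on $\Gamma_0 \setminus B_1$ and has $\supp(\chi) \subseteq \Gamma$ (built by combining a smooth angular cutoff on the sphere extended radially with a cutoff in $|z|$ vanishing near the origin). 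The strategy is then to show that $\chi^w(x,D) u \in \bigcap_s Q^s(\rr d) = \cS(\rr d)$ and to extract $z_0 \notin \WF_G(u)$ from the microellipticity inclusion in \eqref{microlocalelliptic1}.

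For a fixed $s \in \ro$, the key observation is that the compact set $\supp(\chi) \cap S_{2d-1}$ sits inside $\Gamma$ and is therefore disjoint from $\WF_{Q^s}(u)$. For each $w$ in this set, the implication $(1)\Rightarrow(2)$ of Proposition \ref{characterizationWFQs} delivers a symbol realizing the microlocal $Q^s$-regularity at $w$, and Lemma \ref{lem:pdomod} then furnishes an open conic neighborhood $\Delta_w$ of $w$ such that every $\tilde b \in G^0$ with $\supp(\tilde b) \subseteq \Delta_w$ satisfies $\tilde b^w(x,D) u \in Q^s(\rr d)$. Extracting a finite subcover $\Delta_{w_1}, \dots, \Delta_{w_N}$ and using a conic partition of unity subordinate to it, I decompose $\chi = \chi_r + \sum_{i=1}^N \chi_i$ with $\chi_r \in \cS(\rr {2d})$ (so that $\chi_r^w(x,D)$ is regularizing) and $\supp(\chi_i) \subseteq \Delta_{w_i}$. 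Summing the individual contributions yields $\chi^w(x,D) u \in Q^s(\rr d)$. Since $s \in \ro$ was arbitrary, $\chi^w(x,D) u \in \cS(\rr d)$, and hence $\WF_G(\chi^w(x,D) u) = \emptyset$ by property $(1)$ of the Gabor wave front set.

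To conclude, the fact that $\chi \equiv 1$ on the unbounded conic set $\Gamma_0 \setminus B_1$ forces both $z_0 \in \mu\supp(\chi)$ (because $\eabs{z}^N |\chi(z)| = \eabs{z}^N$ is unbounded on any conic neighborhood of $z_0$ intersecting $\Gamma_0 \setminus B_1$) and $z_0 \notin \charac(\chi)$ (the order-zero hypoellipticity estimates for $\chi$ are trivial on $\Gamma_0 \setminus B_1$). Applying the second inclusion in \eqref{microlocalelliptic1} with $a = \chi$ yields
\begin{equation*}
\WF_G(u) \cap \mu\supp(\chi) \subseteq \WF_G(\chi^w(x,D) u) \cup \charac(\chi) = \charac(\chi),
\end{equation*}
so $z_0 \in \WF_G(u)$ would contradict $z_0 \in \mu\supp(\chi) \setminus \charac(\chi)$; therefore $z_0 \notin \WF_G(u)$, as required. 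The main delicate step is the partition-of-unity decomposition in the middle paragraph: the neighborhoods $\Delta_w$ produced by Lemma \ref{lem:pdomod} depend on $s$, so the decomposition of $\chi$ must be carried out separately for each $s$, and the intersection $\bigcap_s Q^s(\rr d) = \cS(\rr d)$ is only taken at the very end; the cutoff $\chi$ itself, however, is chosen once and for all from the common cone $\Gamma$.
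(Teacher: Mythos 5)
Your proof is correct and follows essentially the same route as the paper's: for each fixed $s$ you combine Proposition \ref{characterizationWFQs}, Lemma \ref{lem:pdomod}, compactness of the relevant set of directions on $S_{2d-1}$ and a conic partition of unity to show that a microlocal cutoff applied to $u$ lands in $Q^s(\rr d)$, and then let $s$ range over $\ro$ to land in $\cS(\rr d)$. The only cosmetic differences are that you decompose the cutoff $\chi$ itself as $\chi_r+\sum_i\chi_i$ rather than composing $\chi^w(x,D)$ with a family of operators whose symbols sum to one on the cone, and that you conclude via the microellipticity inclusion \eqref{microlocalelliptic1} (correctly checking $z_0\in\mu\supp(\chi)\setminus\charac(\chi)$) where the paper appeals directly to the definition of $\WF_G$ as an intersection of characteristic sets; both endings are valid.
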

\begin{proof}
``$\supseteq$'': 
This is an immediate consequence of $\WF_{Q^s}(u) \subseteq \WF_G (u)$ for all $s \in \ro$, 
combined with the fact that $\WF_G(u) \subseteq T^* \rr d \setminus 0$ is a closed subset. 

``$\subseteq$'': We may assume $\overline{\bigcup_{s \in \ro} \WF_{Q^s}(u)} \neq T^* \rr d \setminus 0$. Let $0 \neq z_0\notin \overline{\bigcup_{s \in \ro} \WF_{Q^s}(u)}$. By assumption, since $\overline{\bigcup_{s \in \ro} \WF_{Q^s}(u)} \subseteq T^* \rr d \setminus 0$ is closed and conic, there exists an open conic set $\Gamma \subseteq T^* \rr d \setminus 0$ containing $z_0$ such that $\overline{\Gamma}\cap \overline{\bigcup_{s \in \ro} \WF_{Q^s}(u)}=\emptyset$. 
Pick $b\in G^0$ with $\supp(b) \subseteq \Gamma$ such that $z_0 \notin \charac(b)$. We claim $b^w(x,D)u\in \cS(\rr d)$. 

To prove this we fix $s\in\ro$ and note, again using Proposition \ref{characterizationWFQs} and Lemma \ref{lem:pdomod}, that for any $z \in \overline{\Gamma \cap S_{2d-1}}$ there exists an open cone $\Gamma_z \subseteq T^* \rr d \setminus 0$ and $a_z \in G^0$ such that $z \notin \charac(a_z)$, $z \in \supp (a_z) \subseteq \Gamma_z$ and $a_z^w(x,D)u\in Q^s(\rr d)$. 
By compactness we have a finite covering of the form
\begin{equation*}
\overline{\Gamma} \subseteq  \bigcup_{k=1}^n \Gamma_{k} 
\end{equation*}
where $\Gamma_{k} = \Gamma_{z_k}$ for some corresponding points $z_k \in \Gamma_k \cap S_{2d-1}$, $1 \leq k \leq n$. 
We may assume that these points satisfy $z_{j} \notin \Gamma_{k}$ for $j \neq k$. 

A partition of unity gives
\begin{equation*}
\sum_{k=1}^n  a_{k} (z) = 1, \quad z \in \overline{\Gamma}, \quad |z| \geq 1,  
\end{equation*}
for $a_{k} \in G^0$ such that $\supp (a_k) \subseteq \Gamma_k$, $z_{k} \notin \charac(a_k)$ 
(cf. \cite[Remark~2.5]{Rodino1})
and $a_{k}^w(x,D)u \in Q^s(\rr d)$ by Lemma \ref{lem:pdomod}, for $1 \leq k \leq n$. 

By \eqref{calculuscomposition1} we get
\begin{equation*}
b \wpr \sum_{k=1}^n a_k = b \sum_{k=1}^n a_k + r_1 = b + r_2
\end{equation*}
where $r_1, r_2 \in \cS(\rr {2d})$. 
Thus 
\begin{equation*}
b^w(x,D) u = \sum_{k=1}^n b^w(x,D) \, a_k^w(x,D) u - r_2^w(x,D) u \in Q^s(\rr d)
\end{equation*}
by virtue of $a_k^w(x,D) u \in Q^s (\rr d)$ for $1 \leq k \leq n$, the continuity on $Q^s (\rr d)$ of operators of order zero, and the continuity $r_2^w(x,D): \cS'(\rr d) \mapsto \cS(\rr d) \subseteq Q^s (\rr d)$. Since $s \in \ro$ is arbitrary we have $b^w(x,D) u \in \cS(\rr d)$, which shows that $z_0 \notin \WF_G(u)$. 
We have thus shown
\begin{equation*}
\WF_G(u) \subseteq \overline{\bigcup_{s \in \ro} \WF_{Q^s}(u)}.
\end{equation*}
\end{proof}

\section{Microlocal and micro-hypoelliptic properties}\label{sec:micro}

The Gabor wave front set is governed by the microlocal and microelliptic inclusions \eqref{microlocalelliptic1} with respect to pseudodifferential operators with Shubin symbols. 
We now prove analogous statements for the Sobolev--Gabor wave front set for pseudodifferential as well as localization operators.
\begin{prop}
\label{prop:microloc}
If $u \in \cS'(\rr d)$, $m,s \in \ro$ and $a\in G^m$
then 
\begin{equation*}
\WF_{Q^{s-m}}(a^w(x,D)u) \subseteq \WF_{Q^{s}}(u) \quad \mbox{and} \quad \WF_{Q^{s-m}}(A_a u) \subseteq \WF_{Q^s}(u).
\end{equation*}
\end{prop}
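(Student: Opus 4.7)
Plan: The two inclusions can be unified by the Weyl--Wick connection (Proposition~\ref{WW}), which identifies $A_a$ with $\tilde b^w(x,D)$ for some $\tilde b \in G^m$; so it is enough to treat the pseudodifferential case.

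The starting point is Proposition~\ref{characterizationWFQs}: from $0 \neq z_0 \notin \WF_{Q^s}(u)$ I extract $b \in G^0$ with $z_0 \notin \charac(b)$ and $b^w(x,D) u \in Q^s(\rr d)$. The goal is to exhibit $c \in G^0$ with $z_0 \notin \charac(c)$ and $c^w(x,D) a^w(x,D) u \in Q^{s-m}(\rr d)$; by the same characterization this will yield $z_0 \notin \WF_{Q^{s-m}}(a^w(x,D) u)$.

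Since $z_0 \notin \charac(b)=\charac_0(b)$, Proposition~\ref{microelliptic} supplies a microlocal left parametrix: for a cutoff $\chi \in G^0$ supported in a sufficiently small closed conic neighborhood of $z_0$ there exists $p \in G^0$ with $p \wpr b = \chi + r$, $r \in \cS(\rr{2d})$. I choose $\chi$ so that $\chi \equiv 1$ on an open conic neighborhood $\Gamma'$ of $z_0$ outside a compact set, and then pick $c \in G^0$ with $z_0 \notin \charac(c)$ and $\supp(c) \subseteq \Gamma$, where $\Gamma$ is an open conic set with $z_0 \in \Gamma$ and $\overline{\Gamma \cap S_{2d-1}} \subseteq \Gamma'$. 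Setting $\tilde a := c \wpr a \wpr p \in G^m$ I compute
\begin{equation*}
\tilde a \wpr b = c \wpr a \wpr (\chi + r) = c \wpr a + c \wpr a \wpr (\chi - 1) + c \wpr a \wpr r.
\end{equation*}
The third term is in $\cS(\rr{2d})$ because $r \in \cS(\rr{2d})$. For the middle term, the asymptotic expansion~\eqref{calculuscomposition1} gives $\mu\supp(f \wpr g) \subseteq \mu\supp(f) \cap \mu\supp(g)$; here $\mu\supp(c) \subseteq \overline{\Gamma} \cap (T^*\rr d \setminus 0) \subseteq \Gamma'$, whereas $\chi \equiv 1$ on $\Gamma'$ outside a compact set forces $\mu\supp(\chi - 1) \cap \Gamma' = \emptyset$. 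Consequently $\mu\supp(c \wpr a \wpr (\chi - 1)) = \emptyset$, so this symbol lies in $\cS(\rr{2d})$. Altogether $\tilde a \wpr b = c \wpr a + r'$ with $r' \in \cS(\rr{2d})$.

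Applying the Weyl quantization to $u$ then gives
\begin{equation*}
c^w(x,D) a^w(x,D) u = \tilde a^w(x,D) b^w(x,D) u - (r')^w(x,D) u.
\end{equation*}
The first summand belongs to $Q^{s-m}(\rr d)$ because $b^w(x,D) u \in Q^s(\rr d)$ and $\tilde a^w(x,D)$ maps $Q^s(\rr d)$ continuously into $Q^{s-m}(\rr d)$ by Proposition~\ref{prop:Sobolevprop}(1); the second lies in $\cS(\rr d) \subseteq Q^{s-m}(\rr d)$. The main technical obstacle is the microsupport manipulation, namely that a Weyl product of symbols with disjoint microsupports lies in $\cS(\rr{2d})$; this is what allows the parametrix composition to collapse to $c \wpr a$ modulo a regularizing error, after which regularity on the scale of Shubin--Sobolev spaces is transferred through $\tilde a^w(x,D)$ in the standard way.
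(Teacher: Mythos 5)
Your argument is correct, but it is not the route the paper takes. The paper's proof is a commutator induction (in the style of Cordes): starting from $b^w(x,D)u\in Q^s$ with $z_0\notin\charac(b)$ and the a priori regularity $u\in Q^t$, it writes $(b^w(x,D))^N a^w(x,D)u$ by repeatedly moving copies of $b^w(x,D)$ past $a^w(x,D)$, using that each commutator $[b^w(x,D),a^w(x,D)]$ drops the order by $2$ in the Weyl calculus, until after $N$ steps (with $t+2N\geq s$) everything lands in $Q^{s-m}$; the conclusion then follows from Proposition \ref{characterizationWFQs} applied to the $N$-fold product $b_N=b\wpr\cdots\wpr b$. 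You instead build a microlocal left parametrix $p$ for $b$ via Proposition \ref{microelliptic}, so that $c\wpr a=(c\wpr a\wpr p)\wpr b+r'$ with $r'\in\cS(\rr{2d})$, and then transfer $b^w(x,D)u\in Q^s$ through the order-$m$ operator $(c\wpr a\wpr p)^w(x,D)$ using Proposition \ref{prop:Sobolevprop}(1). Your approach needs no induction and no a priori exponent $t$, and it is closer in spirit to Lemma \ref{lem:pdomod} and to the proof of Theorem \ref{thm:microell}; what it costs is the parametrix machinery plus the microsupport identity $\mu\supp(f\wpr g)\subseteq\mu\supp(f)\cap\mu\supp(g)$ together with the fact that a Shubin symbol with empty microsupport lies in $\cS(\rr{2d})$. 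These facts are true and standard, and the paper itself invokes the special case ``$\supp(b)\subseteq\Gamma$ and \eqref{musuppdef} on $\Gamma$ imply $b\wpr a\in\cS(\rr{2d})$'' when proving \eqref{sharpening1}, but strictly speaking the general inclusion requires combining the asymptotic expansion \eqref{calculuscomposition1} with the remainder estimates for the asymptotic sum; if you rely on it, you should record that step (or reduce to the paper's disjoint-support special case, which suffices here since $\supp(c)$ and the set where $\chi-1$ fails to vanish can be arranged to have compact intersection). With that remark made explicit, your proof is complete.
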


\begin{proof}
The proof is inspired by that of \cite[Theorem~5.4]{Cordes1}.
By Proposition \ref{prop:Sobolevprop} we have $u\in Q^{t}(\rr d)$ for some $t \in \ro$. 
Pick an integer $N \geq 1$ such that $t + 2 N \geq s$. 
Assume $0 \neq z_0 \notin \WF_{Q^s}(u)$.
By Proposition \ref{characterizationWFQs} there exists $b\in G^0$ such that $z_0 \notin \charac (b)$ and $b^w(x,D)u\in Q^s(\rr d)$. 
We have $(b^w(x,D))^N u\in Q^s$, by the $Q^s$-continuity of pseudodifferential operators of order zero, and $z_0 \notin \charac(b_N)$ holds, where $b_N = b \wpr \cdots \wpr b$ is the $N$-fold Weyl product of $N$ copies of $b$.

Next we study $(b^w(x,D))^N a^w(x,D) u$ which at a first glance can be said to belong to $Q^{t-m}(\rr d)$. 
We may write
\begin{multline*}
(b^w(x,D))^N a^w(x,D) u=(b^w(x,D))^{N-1} \underbrace{a^w(x,D)}_{\text{order }m}  \underbrace{b^w(x,D) u}_{\in Q^s}\\ + (b^w(x,D))^{N-1} [b^w(x,D),a^w(x,D)] u. 
\end{multline*}
It follows that 
\begin{equation*}
(b^w(x,D))^N a^w(x,D) u=(b^w(x,D))^{N-1} [b^w(x,D),a^w(x,D)] u + v
\end{equation*}
where $v \in Q^{s-m}$. The order of $[b^w(x,D),a^w(x,D)]$ is $m-2$. 
We have thus replaced $a^w(x,D)$ by an operator of lower order, and
\begin{equation*}
(b^w(x,D))^N a^w(x,D) u \in Q^{t-m+2} + Q^{s-m}. 
\end{equation*}

Repeating the argument by induction gives 
\begin{equation*}
(b^w(x,D))^N a^w(x,D) u \in Q^{t-m+2N} + Q^{s-m} \subseteq Q^{s-m},
\end{equation*}
since $t + 2 N \geq s$. 
By virtue of Proposition \ref{characterizationWFQs} this proves $z_0\notin \WF_{Q^{s-m}}(a^w(x,D)u)$, so we have shown 
\begin{equation*}
\WF_{Q^{s-m}}(a^w(x,D)u) \subseteq \WF_{Q^{s}}(u).
\end{equation*}
The statement about localization operators follows from Proposition \ref{WW}.
\end{proof}

Our final result is a reverse inclusion to Proposition \ref{prop:microloc}, a micro-hypoellipticity statement:
\begin{thm}
\label{thm:microell}
If $u \in \cS'(\rr d)$, $m,s \in \ro$ and $a \in G^m$ 
then 
\begin{equation*}
\WF_{Q^{s}}(u)\subseteq \bigcap_{m^\prime\leq m}\WF_{Q^{s-m^\prime}}(a^w(x,D)u) \cup \charac_{m^\prime} (a) 
\end{equation*}
and
\begin{equation*}
\WF_{Q^{s}}(u)\subseteq \bigcap_{m^\prime\leq m}\WF_{Q^{s-m^\prime}}(A_a u) \cup \charac_{m^\prime} (a). 
\end{equation*}
\end{thm}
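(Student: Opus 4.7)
The localization operator statement follows from the pseudodifferential one by the Weyl--Wick connection: by Proposition \ref{WW}, $A_a = b^w(x,D)$ with $b \in G^m$ and $\charac_{m'}(b) \subseteq \charac_{m'}(a)$ for every $m' \leq m$. So I focus on the pseudodifferential inclusion, which by contraposition amounts to fixing $m' \leq m$ and showing that $z_0 \notin \WF_{Q^{s-m'}}(a^w(x,D)u) \cup \charac_{m'}(a)$ implies $z_0 \notin \WF_{Q^s}(u)$.

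The idea is a microlocal parametrix argument carried out on a conic neighborhood of $z_0$ where the hypoellipticity of $a$ and the microlocal $Q^{s-m'}$ regularity of $a^w(x,D) u$ can be used simultaneously. From $z_0 \notin \charac_{m'}(a)$, pick an open conic neighborhood $\Gamma_a$ of $z_0$ on which \eqref{noncharlowerbound2} holds. From $z_0 \notin \WF_{Q^{s-m'}}(a^w(x,D) u)$, using Proposition \ref{characterizationWFQs} together with Lemma \ref{lem:pdomod}, obtain an open conic neighborhood $\Gamma_c$ of $z_0$ such that $\tilde c^w(x,D) a^w(x,D) u \in Q^{s-m'}$ for every $\tilde c \in G^0$ with $\supp(\tilde c) \subseteq \Gamma_c$. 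Next choose nested open conic neighborhoods $z_0 \in \Gamma_0 \subset \Gamma_1 \subset \Gamma_a \cap \Gamma_c$ satisfying $\overline{\Gamma_0 \cap S_{2d-1}} \subseteq \Gamma_1$ and $\overline{\Gamma_1 \cap S_{2d-1}} \subseteq \Gamma_a \cap \Gamma_c$, pick $\chi \in G^0$ with $\supp(\chi) \subseteq \Gamma_0$ (outside a large ball) and $z_0 \notin \charac(\chi)$, and apply Proposition \ref{microelliptic} to produce $b \in G^{-m'}$ satisfying $b \wpr a = \chi + r$ with $r \in \cS(\rr{2d})$. Inspection of the construction in the proof of Proposition \ref{microelliptic} shows that every term $b_j$ in the parametrix series has $\supp(b_j) \subseteq \supp(\chi) \subseteq \Gamma_0$, so $\mu\supp(b) \subseteq \overline{\Gamma_0}$.

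Finally I pick a cutoff $c \in G^0$ with $\supp(c) \subseteq \Gamma_1$ and $c(z)=1$ for $z \in \Gamma_0$, $|z|$ large, and split
\begin{align*}
\chi^w(x,D) u
& = b^w(x,D) c^w(x,D) a^w(x,D) u \\
& \quad + (b \wpr (1-c))^w(x,D) a^w(x,D) u - r^w(x,D) u.
\end{align*}
The first term belongs to $Q^s$, since $c^w(x,D) a^w(x,D) u \in Q^{s-m'}$ by the choice of $\Gamma_c$ and $b^w(x,D): Q^{s-m'} \mapsto Q^s$ is continuous by Proposition \ref{prop:Sobolevprop}. The last term is Schwartz. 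For the middle term, the conic essential supports of $b$ (contained in $\overline{\Gamma_0}$) and of $1-c$ (disjoint from $\Gamma_0$ at infinity, since $c \equiv 1$ on $\Gamma_0$ for $|z|$ large) are disjoint in $T^*\rr d \setminus 0$, so the Weyl expansion \eqref{calculuscomposition1} yields $b \wpr (1-c) \in \cS(\rr{2d})$, and hence the middle term is also Schwartz. Thus $\chi^w(x,D) u \in Q^s$, and Proposition \ref{characterizationWFQs} gives $z_0 \notin \WF_{Q^s}(u)$.

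The main technical hurdle is verifying rigorously that $b \wpr (1-c) \in \cS(\rr{2d})$ from the disjointness of essential supports; this requires controlling the asymptotic Weyl series term by term and bounding all residuals, in a manner morally parallel to the $I_1, I_2$ split in the proof of Proposition \ref{WW}. A secondary but unavoidable bookkeeping step is the careful choice of the nested cones $\Gamma_0 \subset \Gamma_1$, which must simultaneously lie inside $\Gamma_a \cap \Gamma_c$ and be compatible with the parametrix construction so that the supports of $\chi$, $b$, and $1-c$ fit together as needed.
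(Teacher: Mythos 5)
Your argument is correct in substance, and the localization-operator half is handled exactly as in the paper (via Proposition \ref{WW}). For the pseudodifferential half you take a genuinely different decomposition. The paper first produces $c \in G^0$ with $z_0 \notin \charac(c)$ and $c^w(x,D)a^w(x,D)u \in Q^{s-m'}$, then observes (via the Weyl expansion, as in the proof of Proposition \ref{WW}) that $z_0 \notin \charac_{m'}(c \wpr a)$, and applies Proposition \ref{microelliptic} to the \emph{composite} symbol $c \wpr a$, obtaining $b \wpr c \wpr a = \chi + r$; the identity $\chi^w u = b^w c^w a^w u - r^w u$ then has no cross term at all. You instead apply Proposition \ref{microelliptic} to $a$ alone, getting $b \wpr a = \chi + r$, and must then insert the cutoff $c$ by hand, which forces you to show $b \wpr (1-c) \in \cS(\rr{2d})$ from conic disjointness of $\mu\supp(b)$ and $\conesupp(1-c)$. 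Your route buys you a simpler input to Proposition \ref{microelliptic} (no need to verify non-hypercharacteristicity of a Weyl product), at the cost of the extra disjoint-support lemma; the paper's route concentrates all the asymptotic-expansion work in the single claim $z_0 \notin \charac_{m'}(c \wpr a)$ and avoids the cross term entirely. Both are standard and both work.

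One point to tighten: with your stated choices, $\mu\supp(b) \subseteq \overline{\Gamma_0}$ (closure in $T^*\rr d \setminus 0$) while $c \equiv 1$ only on the \emph{open} cone $\Gamma_0$ at infinity, so $\conesupp(1-c)$ and $\mu\supp(b)$ may both meet $\partial\Gamma_0$ and the disjointness you invoke is not strict. The fix is the one you already anticipate in your bookkeeping remark: interpose one more cone, i.e.\ take $\supp(\chi)$ in a cone $\Gamma_{-1}$ with $\overline{\Gamma_{-1} \cap S_{2d-1}} \subseteq \Gamma_0$ and require $c \equiv 1$ on $\Gamma_0$ for large $|z|$, so that $\mu\supp(b) \subseteq \overline{\Gamma_{-1}}$ is separated from $\conesupp(1-c)$ by a positive angular distance. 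With that adjustment the term-by-term decay of the Weyl expansion of $b \wpr (1-c)$, plus the arbitrarily negative order of the remainders, does give $b \wpr (1-c) \in \cS(\rr{2d})$ as you claim.
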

\begin{proof}
Suppose $0 \neq z_0 \notin \WF_{Q^{s-m^\prime}}(a^w(x,D)u) \cup \charac_{m^\prime} (a)$ for some $m' \leq m$. 
By Proposition \ref{characterizationWFQs} there exists $c \in G^0$ with $z_0 \notin \charac (c)$ and $c^w(x,D) a^w(x,D) u \in Q^{s-m'}(\rr d)$. 
By an argument similar to the proof of $\charac_{m'} (b) \subseteq \charac_{m'} (a)$ in Proposition \ref{WW}, 
involving \eqref{calculuscomposition1}, 
it follows from $z_0 \notin \charac (c)$ and $z_0 \notin \charac_{m'} (a)$ that $z_0 \notin \charac_{m'} (c \wpr a)$.

Thus by Proposition \ref{microelliptic} there exist $\chi \in G^0$ with $z_0 \notin \charac (\chi)$, and $b \in G^{-m^\prime}$ with $z_0 \notin \charac_{-m} (b)$, such that 
\begin{equation*}
b \wpr c \wpr a = \chi + r
\end{equation*}
where $r \in \cS(\rr {2d})$. 
Hence 
\begin{equation*}
\chi^w(x,D) u = b^w(x,D) c^w(x,D) a^w(x,D) u - r^w(x,D) u. 
\end{equation*}
We have $r^w(x,D) u \in \cS \subseteq Q^s$, and $b^w(x,D) c^w(x,D) a^w(x,D) u \in Q^s$ since $c^w(x,D) a^w(x,D) u \in Q^{s-m'}$ and $b \in G^{-m'}$.

Hence $\chi^w(x,D) u \in Q^s(\rr d)$ so according to Proposition \ref{characterizationWFQs} we have shown that $z_0 \notin \WF_{Q^{s}}(u)$. This proves the first inclusion for pseudodifferential operators. 

To prove the second inclusion for localization operators,
let $z_0 \notin \WF_{Q^{s-m^\prime}}(A_au) \cup \charac_{m^\prime} (a)$ for some $m' \leq m$. 
Then 
\begin{equation*}
z_0\notin \WF_{Q^{s-m^\prime}}(b^w(x,D)u) \cup \charac_{m^\prime} (b)
\end{equation*}
where $b = \pi^{-d} a * e^{-|\cdot|^2}$ according to Proposition \ref{WW}.  
The second inclusion now follows from the first. 
\end{proof}
A special case of Theorem \ref{thm:microell} combined with Proposition \ref{prop:microloc} gives 
two inclusions that are Sobolev--Gabor wave front set versions of the inclusions  \eqref{microlocalelliptic1}. 

\begin{cor}
\label{cor:microinclu}
If $u \in \cS'(\rr d)$, $m,s \in \ro$ and $a \in G^m$
then 
\begin{equation*}
\WF_{Q^{s-m}}(a^w(x,D) u) \subseteq \WF_{Q^{s}}(u)\subseteq \WF_{Q^{s-m}}(a^w(x,D)u) \cup \charac (a) 
\end{equation*}
and
\begin{equation*}
\WF_{Q^{s-m}}(A_a u) \subseteq \WF_{Q^{s}}(u)\subseteq \WF_{Q^{s-m}}(A_au) \cup \charac (a). 
\end{equation*}
\end{cor}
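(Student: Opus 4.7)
The plan is short because Corollary \ref{cor:microinclu} is essentially a bookkeeping consequence of the two immediately preceding results. I would first record that the left-hand inclusions
\begin{equation*}
\WF_{Q^{s-m}}(a^w(x,D) u) \subseteq \WF_{Q^{s}}(u), \qquad \WF_{Q^{s-m}}(A_a u) \subseteq \WF_{Q^{s}}(u)
\end{equation*}
are nothing but Proposition \ref{prop:microloc} copied verbatim, so nothing needs to be done there.

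For the right-hand inclusions I would specialize the intersection appearing in Theorem \ref{thm:microell} to the single index $m' = m$. Since an intersection is contained in each of its terms, Theorem \ref{thm:microell} in particular gives
\begin{equation*}
\WF_{Q^{s}}(u) \subseteq \WF_{Q^{s-m}}(a^w(x,D)u) \cup \charac_{m}(a),
\end{equation*}
and the analogous statement with $a^w(x,D)u$ replaced by $A_a u$. I would then invoke the identity $\charac_m(a) = \charac(a)$ recorded immediately after Definition \ref{noncharacteristic1}, which turns these into the asserted inclusions. Taking the union with the already established left-hand inclusions finishes both chains.

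There is really no main obstacle; the only thing that requires a small amount of care is choosing $m' = m$ (the top of the admissible range), since this is the value that makes the ``loss of derivatives'' $s - m'$ match the index $s - m$ appearing in the statement, and it is also the value for which the sub-characteristic set $\charac_{m'}(a)$ collapses to the plain characteristic set $\charac(a)$. Everything else is a direct transcription.
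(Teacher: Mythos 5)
Your proposal is correct and coincides with the paper's own (implicit) argument: the left-hand inclusions are Proposition \ref{prop:microloc} verbatim, and the right-hand ones are the $m'=m$ instance of Theorem \ref{thm:microell} together with the convention $\charac_m(a)=\charac(a)$. Nothing further is needed.
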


We note that Proposition \ref{WFequality} and Corollary \ref{cor:microinclu} combined imply   
\eqref{microlocalelliptic1}. 

As a final consequence we obtain from Theorem \ref{thm:microell} with the same technique 
the following corollary, which says that hypoellipticity of a symbol implies micro-hypoellipticity with respect to the Gabor wave front set of the corresponding operator.
This is the Gabor wave front set version of \cite[Corollary 2.5.6]{Cordes1}.
\begin{cor}
\label{cor:hypoelliptic}
If $m \in \ro$, $a \in G^m$, $m'\leq m$ and $\charac_{m'}(a)=\emptyset$ then
\begin{equation*}
\WF_G(u)=\WF_G(a^w(x,D)u)=\WF_G(A_a u), \quad  u \in\cS'(\rr d).
\end{equation*}
\end{cor}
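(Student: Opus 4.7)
The proof will be a direct assembly of the machinery already built: the pointwise--inclusion statements of Proposition \ref{prop:microloc} and Theorem \ref{thm:microell}, together with the characterization of $\WF_G$ by the family $\{\WF_{Q^s}\}_{s \in \ro}$ in Proposition \ref{WFequality}. The hypothesis $\charac_{m'}(a) = \emptyset$ enters only at one point, to remove the characteristic-set term in Theorem \ref{thm:microell}.

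First I would fix an arbitrary $s \in \ro$ and combine the two ingredients. From Proposition \ref{prop:microloc} we have
\begin{equation*}
\WF_{Q^{s-m}}(a^w(x,D) u) \subseteq \WF_{Q^s}(u),
\end{equation*}
and from Theorem \ref{thm:microell}, applied with the same $m'$ for which $\charac_{m'}(a) = \emptyset$, we get
\begin{equation*}
\WF_{Q^s}(u) \subseteq \WF_{Q^{s-m'}}(a^w(x,D) u) \cup \charac_{m'}(a) = \WF_{Q^{s-m'}}(a^w(x,D)u).
\end{equation*}
So the two-sided bound
\begin{equation*}
\WF_{Q^{s-m}}(a^w(x,D) u) \subseteq \WF_{Q^s}(u) \subseteq \WF_{Q^{s-m'}}(a^w(x,D) u)
\end{equation*}
holds uniformly in $s \in \ro$.

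Next I would take the union over $s \in \ro$. Reindexing ($s \mapsto s+m$ on the left and $s \mapsto s+m'$ on the right) both outer unions collapse to $\bigcup_{s \in \ro} \WF_{Q^s}(a^w(x,D) u)$, squeezing
\begin{equation*}
\bigcup_{s \in \ro} \WF_{Q^s}(u) = \bigcup_{s \in \ro} \WF_{Q^s}(a^w(x,D) u).
\end{equation*}
Taking closures in $T^* \rr d \setminus 0$ and invoking Proposition \ref{WFequality} on both sides then yields $\WF_G(u) = \WF_G(a^w(x,D) u)$.

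For the localization-operator identity $\WF_G(u) = \WF_G(A_a u)$, the cleanest route is to reduce to the pseudodifferential case via the Weyl--Wick connection. By Proposition \ref{WW}, $A_a = b^w(x,D)$ with $b \in G^m$ satisfying $\charac_{m'}(b) \subseteq \charac_{m'}(a) = \emptyset$, so the hypothesis is preserved and the argument above applied to $b$ gives the conclusion. I do not foresee any real obstacle: the whole point is that the microlocal inclusions of Proposition \ref{prop:microloc} and Theorem \ref{thm:microell} are already sharp enough, and the only thing to verify is that the bookkeeping of unions and closures is compatible with the closure that appears in Proposition \ref{WFequality}; the reindexing above makes this transparent.
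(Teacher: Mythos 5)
Your proposal is correct and is precisely the argument the paper intends: the paper gives no explicit proof, only remarking that the corollary follows ``from Theorem \ref{thm:microell} with the same technique,'' namely combining the inclusions of Proposition \ref{prop:microloc} and Theorem \ref{thm:microell} (with the characteristic term removed by $\charac_{m'}(a)=\emptyset$) with the union-and-closure identity of Proposition \ref{WFequality}, and reducing the localization-operator case to the Weyl case via Proposition \ref{WW}. Your reindexing of the unions over $s$ makes this bookkeeping explicit and is exactly right.
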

An example provides an interpretation of the previous results from a time-frequency point of view. 
\begin{example}
Let $m\geq 0$ and let $\Gamma_1, \Gamma_2 \subseteq T^* \rr d \setminus 0$ be two open cones such that $\Gamma_1\cup\Gamma_2 = T^* \rr d \setminus 0$
and $\Gamma_1 \setminus \Gamma_2$ and $\Gamma_2 \setminus \Gamma_1$ have non-empty interiors. Pick two positive symbols $\chi_1\in G^0$ and $\chi_2\in G^{-m}$ such that 
\begin{itemize}
\item $\chi_1(z)=1$ for $z\in\Gamma_1 \setminus \overline{\Gamma_2}$, 
\item $\chi_1(z)=0$ for $z \in \Gamma_2 \setminus (\Gamma_1 \cup B_1)$, 
\item $\charac_{-m}(\chi_1+\chi_2)=\emptyset$. 
\end{itemize}
\begin{center}
\begin{figure}[!ht]
\begin{tikzpicture}[
        scale=0.7
        ]
    
\draw[-] ({5 * cos(70.0)},{5 * sin(70.0)}) -- (0,0) -- ({5 * cos(20.0)},{5 * sin(20.0)}) ;
\fill[black,opacity=0.22] ({5 * cos(80.0)},{5 * sin(80.0)}) arc (80:370:5) -- (0,0) -- ({5 * cos(80.0)},{5 * sin(80.0)});
\fill[black,opacity=0.06,postaction={right color=lightgray, left color=black, opacity=0.2}] ({5 * cos(70.0)},{5 * sin(70.0)}) arc (70:80:5) -- (0,0) -- ({5 * cos(70.0)},{5 * sin(70.0)});
\fill[black,opacity=0.06,postaction={top color=lightgray, bottom color=black, opacity=0.2}] ({5 * cos(10.0)},{5 * sin(10.0)}) arc (10:20:5) -- (0,0) -- ({5 * cos(10.0)},{5 * sin(10.0)});

\node at (4.2,4.2) {$\Gamma_2$};
    \draw[->] (-4.5,0)--(4.5,0) node[below]{$x$};
    \draw[->] (0,-4.5)--(0,4.5) node[left]{$\xi$};

\draw[<->] ({5.6 * cos(70.0)},{5.6 * sin(70.0)}) arc (70:380:5.6); 
\draw[<->] ({5.4 * cos(10.0)},{5.4 * sin(10.0)}) arc (10:80:5.4); 
\node at (-3.2,5.4) {$\Gamma_1$};

\fill[white] (0,0) circle (0.1); 
\draw  (0,0) circle (0.1);

\clip ({5 * cos(20.0)},{5 * sin(20.0)}) -- (0,0) -- ({5 * cos(70.0)},{5 * sin(70.0)}) arc (70:20:5);
\draw[-] ({5 * cos(10.0)},{5 * sin(10.0)}) -- (0,0) -- ({5 * cos(80.0)},{5 * sin(80.0)}) ;
\fill [white, opacity=0, postaction={outer color=white, inner color=black, opacity=0.3}]  (0,0) circle (5);
\fill[white] (0,0) circle (0.1); 
\draw  (0,0) circle (0.1);
\end{tikzpicture}
\caption{A splitting of the time-frequency plane by a filter.}
\label{fig:appl}
\end{figure}
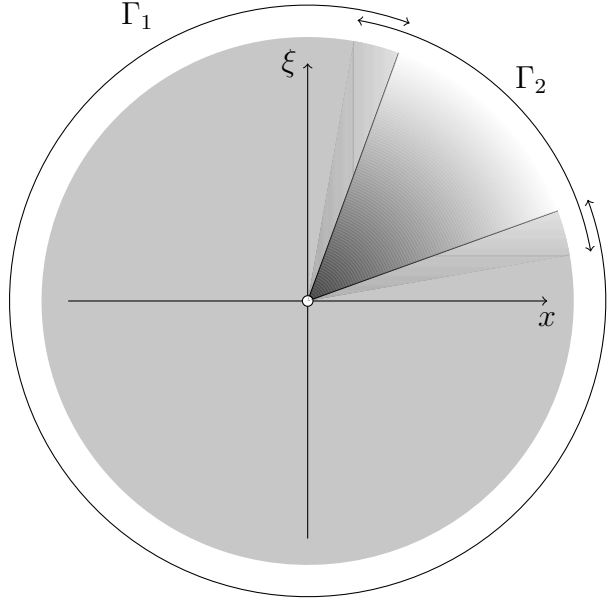
\end{center}

Figuratively speaking, $A:=A_{\chi_1+\chi_2}$ is the prototype of a filter, that when applied to a signal $u\in\cS'(\rr d)$ preserves the time-frequency contributions in $\Gamma_1\setminus \overline{\Gamma_2}$ and dampens the signal in $\Gamma_2\setminus\Gamma_1$. Such a splitting for $d=1$ is visualized in Figure \ref{fig:appl}. In the following, this will be interpreted from a microlocal point of view.

First of all, the total singularities are preserved $\WF_G(u) = \WF_G (A u)$ by Corollary \ref{cor:hypoelliptic}. Furthermore
\begin{equation*}
\WF_{Q^s}(Au)  \cap \Gamma_1\setminus \overline{\Gamma_2} = \WF_{Q^s}(u) \cap \Gamma_1\setminus \overline{\Gamma_2}
\end{equation*}
by Proposition \ref{prop:microloc} and Theorem \ref{thm:microell}, meaning that $A$ preserves the Sobolev--Gabor order of singularities in $\Gamma_1\setminus \overline{\Gamma_2}$. 

For $\Gamma_2\setminus\Gamma_1$ the order changes. We claim 
\begin{equation}\label{WFequalityfilter}
\WF_{Q^{s+m}}(Au) \cap \Gamma_2\setminus\Gamma_1 = \WF_{Q^s}(u) \cap \Gamma_2\setminus\Gamma_1. 
\end{equation}
Indeed, by Theorem \ref{thm:microell} we have $ \WF_{Q^s}(u) \subseteq \WF_{Q^{s+m}}(Au)$. 
If conversely $0 \neq z_0\notin \WF_{Q^s}(u)$ and $z_0 \in \Gamma_2 \setminus \Gamma_1$ then pick a pseudodifferential operator $B:=b^w(x,D)$ with symbol $b \in G^0$ supported in $\Gamma_2\setminus \Gamma_1$ such that $z_0 \notin \charac(b)$. Then $B A = B A_{\chi_2}+R$ where $R$ is regularizing, which follows from the asymptotic expansions \eqref{eq:weylwickexp} and \eqref{calculuscomposition1}, since the supports of $b$ and $\chi_1$ have compact intersection.

Thus $B A$ is a pseudodifferential operator of order $-m$ and $z_0 \notin \charac(b)$, and hence, since by Proposition \ref{prop:microloc} and Theorem \ref{thm:microell}, 
\begin{equation*}
\WF_{Q^{s+m}}(A u) \subseteq \WF_{Q^{s+m}}(BA u) \cup \charac(b)
\subseteq \WF_{Q^{s}}(u) \cup \charac(b)
\end{equation*}
we obtain $z_0\notin \WF_{Q^{s+m}}(Au)$. This proves \eqref{WFequalityfilter}.

Summing up we have shown that $A$ preserves the Sobolev--Gabor orders of singularities in $\Gamma_1\setminus \overline{\Gamma_2}$ and decreases the Sobolev--Gabor orders of singularities in $\Gamma_2\setminus\Gamma_1$ by $m$.
\end{example}



\begin{thebibliography}{2000}

\bibitem{Boggiatto1}
P.~Boggiatto, E.~Cordero and K.~Gr\"ochenig, \textit{Generalized anti--Wick operators with symbols in distributional Sobolev spaces}, Integr. Equ. Oper. Theory \textbf{48} (2004), 427--442.

\bibitem{Boggiatto2}
P.~Boggiatto and J.~Toft, \textit{Embeddings and compactness for generalized Sobolev--Shubin spaces and modulation spaces}, Appl. Anal. \textbf{84} (3) (2005), 269--282.

\bibitem{Cordero1}
E.~Cordero and K.~Gr\" ochenig, \textit{Time-frequency analysis of localization operators}, J. Funct. Anal. \textbf{205} (1) (2003), 107--131. 

\bibitem{Cordero2}
E.~Cordero, F.~Nicola and L.~Rodino, \textit{Propagation of the Gabor wave front set for Schr\"odinger equations with non-smooth potentials}, Rev. Math. Phys. \textbf{27} (1) (2015), (33 pp), DOI: 10.1142/S0129055X15500014. 

\bibitem{Cordes1}
H.~O.~Cordes, \textit{The Technique of Pseudodifferential Operators}, London Mathematical Society Lecture Note Series \textbf{202}, Cambridge University Press, 1995.

\bibitem{Folland1}
G.~B.~Folland, \textit{Harmonic Analysis in Phase Space}, Princeton University Press, 1989.

\bibitem{Grochenig1}
K.~Gr\" ochenig, \textit{Foundations of Time-Frequency Analysis}, Birkh\" auser, Boston, 2001.

\bibitem{Hormander0}
L.~H\"ormander,
\textit{The Analysis of Linear Partial Differential Operators}, Vol. I, III,
Springer, Berlin, 1990.

\bibitem{Hormander1}
L.~H\"ormander, \textit{Quadratic hyperbolic operators}, Microlocal Analysis and Applications, LNM vol. 1495, L. Cattabriga, L. Rodino (Eds.), pp. 118--160, Springer, 1991.
%
\bibitem{Lerner1}
N. Lerner, \textit{Metrics on the Phase Space and Non-Selfadjoint Pseudo-Differential Operators}, Birkh\"auser, Basel, 2010.
%
\bibitem{Nakamura1}
S.~Nakamura,
\textit{Propagation of the homogeneous wave front set for Schr\"odinger equations},
Duke Math. J. \textbf{126} (2) (2005), 349--367.
%
\bibitem{Nicola1}
F.~Nicola and L.~Rodino, \textit{Global Pseudo-Differential Calculus on Euclidean Spaces}, Birkh\"auser, Basel, 2010.
%
\bibitem{Nicola2}
\bysame, \textit{Propagation of Gabor singularities for semilinear Schr\"odinger equations}, Nonlinear Differ. Equ. Appl., Online First (2015),
DOI: 10.1007/s00030-015-0341-2. 
%
\bibitem{Rodino1}
L.~Rodino and P.~Wahlberg, \textit{The Gabor wave front set}, Monaths. Math. \textbf{173} (4) (2014), 625--655. 
%
\bibitem{Rodino2}
K.~Pravda--Starov, L.~Rodino and P.~Wahlberg, \textit{Propagation of Gabor singularities for Schr\"odinger equations with quadratic Hamiltonians}, arXiv:1411.0251 [math.AP] (2015). 
%
\bibitem{Schulz1}
R.~Schulz, \textit{Microlocal Analysis of Tempered Distributions}, Diss. Nieders\"achsische Staats-und Universit\"atsbibliothek G\"ottingen, 2014.
%
\bibitem{SW}
R.~Schulz and P.~Wahlberg, \textit{The equality of the homogeneous and the Gabor wave front set}, arXiv:1304.7608 (2013).
%
\bibitem{Shubin1}
M.~A.~Shubin, \textit{Pseudodifferential Operators and Spectral Theory}, Springer, 2001.
%
\bibitem{Taylor1}
M.~E.~Taylor, \textit{Pseudodifferential Operators}, Princeton University Press, Princeton, N. J., 1981.
%
\bibitem{Wahlberg1}
P.~Wahlberg, \textit{Propagation of polynomial phase space singularities for Schr\"odinger equations with quadratic Hamiltonians}, 
arXiv:1411.6518 math.AP (2015). 
%
\bibitem{Yosida1}
K.~Yosida, \textit{Functional Analysis}, Classics in Mathematics, Springer-Verlag, Berlin Heidelberg, 1995.
%

\end{thebibliography}
\end{document}